\documentclass[preprint,12pt,3p,sort&compress]{elsarticle}
\usepackage[ruled,vlined]{algorithm2e}
\include{pythonlisting}
\usepackage{csquotes}
\usepackage{caption}
\usepackage{subcaption}
\usepackage{easyReview}
\usepackage{natbib}
\usepackage{amssymb,bm}
\usepackage[mathcal]{euscript}
\usepackage{epstopdf}
\usepackage{amsthm,amsmath,mathtools}

\newtheorem{Theorem}{Theorem}
\usepackage{color}
\usepackage{mathrsfs}
\usepackage{dsfont}
\usepackage{graphicx}
\usepackage[shortlabels]{enumitem}
\usepackage{empheq} 
\usepackage{float}
\usepackage{physics}
\usepackage[linkcolor=blue, urlcolor=blue, citecolor=blue,
colorlinks, bookmarks]{hyperref}
\usepackage{amsfonts}
\usepackage{multirow,booktabs}
\usepackage{adjustbox}
\usepackage{cases} 
\allowdisplaybreaks
\numberwithin{equation}{section}
\numberwithin{Theorem}{section}

\numberwithin{Definition}{section}
\newtheorem{lemma}{Lemma}

\numberwithin{Lemma}{section} 
\numberwithin{Proposition}{section}

\numberwithin{Remark}{section}
\numberwithin{Example}{section}

\numberwithin{Corollary}{section}
\journal{.}
\usepackage{array}
\newcolumntype{L}{>{$}l<{$}} 
\newcolumntype{C}{>{$}c<{$}} 
\begin{document}	
	\begin{frontmatter}
		\title{Error analysis of a high-order fully discrete method for two-dimensional time-fractional convection-diffusion equations exhibiting weak initial singularity}
		\author[label1]{Anshima Singh}
		\address[label1]{Department of Mathematical Sciences, Indian Institute of Technology (BHU) Varanasi, Uttar Pradesh, India}
		\ead{anshima.singh.rs.mat18@itbhu.ac.in}
		\author[label1]{Sunil Kumar}
		\ead{skumar.iitd@gmail.com}
\begin{abstract}
 This study presents a novel high-order numerical method designed for solving the two-dimensional time-fractional convection-diffusion (TFCD) equation. The Caputo definition is employed to characterize the time-fractional derivative. A weak singularity at the initial time ($t=0$) is encountered in the considered problem, which is effectively managed by adopting a discretization approach for the time-fractional derivative, where Alikhanov's high-order L2-1$_\sigma$ formula is applied on a non-uniform fitted mesh, resulting in successful tackling of the singularity. A high-order two-dimensional compact operator is implemented to approximate the spatial variables. The alternating direction implicit (ADI) approach is then employed to solve the resulting system of equations by  decomposing the two-dimensional problem into two separate one-dimensional problems. The theoretical analysis, encompassing both stability and convergence aspects, has been conducted comprehensively, and it has shown that method is convergent with an order $\mathcal O\left(N_t^{-\min\{3-\alpha,\theta\alpha,1+2\alpha,2+\alpha\}}+h_x^4+h_y^4\right)$, where $\alpha\in(0,1)$ represents the order of the fractional derivative, $N_t$ is the temporal discretization parameter and $h_x$ and $h_y$ represent spatial mesh widths. Moreover, the parameter $\theta$ is utilized in the construction of the fitted mesh.
\end{abstract}
\begin{keyword}
Two-dimensional problems; Time-fractional convection-diffusion; Caputo derivative; Fitted mesh; Singularity;  High order;  ADI scheme;  error bound.
\end{keyword}
\end{frontmatter}
\section{Introduction}\label{sec:1}
The study of partial differential equations (PDEs) plays a pivotal role in understanding the fundamental principles governing various physical phenomena. Among the wide range of PDEs, the convection-diffusion equation holds significant importance due to its ability to model diverse phenomena, including fluid flow, heat transfer, and mass transport \cite{morton2019revival,brent1988enthalpy,xue2013interval,carroll2010experimental} etc. In recent years, the extension of traditional PDEs to fractional-order PDEs has attracted considerable attention due to its ability to capture anomalous diffusion and long-term memory effects, resulting in more accurate models for complex real-world processes.

The time-fractional convection-diffusion equation has found a multitude of real-world applications, revolutionizing research in various fields. In environmental science, it enables accurate modeling of pollutant dispersion in air and water systems, aiding in the development of effective pollution control strategies \cite{chen2020adaptive,hariharan2014efficient}. In biomedical research, it helps to understand drug diffusion in tissues and the spread of diseases within populations \cite{hose2004thermal,katz2015vaginal}. Moreover, it finds utility in materials engineering to optimize heat and mass transfer in complex media, such as porous materials and composite structures \cite{li2014conjugate,khan2018multiphase,zhang2019fractional}. By incorporating fractional derivatives, the equation provides a powerful tool for unraveling intricate dynamics and capturing anomalous diffusion behavior, facilitating a deeper understanding of real-world phenomena and opening doors to innovative solutions in diverse scientific domains.

Therefore, in this paper, we direct our focus towards the following two-dimensional time-fractional convection-diffusion equation
\begin{align}\label{1}
	\partial^{\alpha}_{t}u=\lambda_1 u_{xx}+\lambda_2 u_{yy}+\mu_1u_x+\mu_2u_y+\gamma u+f,~~  \mbox{in}~\Omega \times (0,T_f],~\Omega \subset \mathds{R}^2,
\end{align}
with initial and boundary conditions
\begin{equation}\label{2}
	\left\{%
	\begin{array}{ll}
		u(x,y,0)=\phi(x,y),&(x,y)\in \bar{\Omega}=\Omega\cup \partial\Omega,\\
		u(x,y,t)=\psi(x,y,t),&({x},y)\in \partial\Omega,~0<t\leq T_f,
	\end{array}%
	\right.
\end{equation}
where $\partial{\Omega}$ denotes the boundary of $\Omega=(0,L)\times(0,L),$ and
\begin{equation}\label{3}
	\partial^{\alpha}_{ t}u(x,y,t)=\frac{1}{\Gamma(1-\alpha)} \int_{0}^{t}(t-\mathcal C)^{-\alpha} \frac{\partial u(x,y,\mathcal C)}{\partial\mathcal C}\,d\mathcal C~
\end{equation}
defines the Caputo derivative of fractional order $\alpha\in (0,1)$. Also, $\lambda_1$ and $\lambda_2$ are positive constants, and $\mu_1$, $\mu_2,$ and $\gamma$ are constants such that $\frac{\mu_1^2}{4\lambda_1}+\frac{\mu_2^2}{4\lambda_2}-\gamma\geq0$. Further $f$, $\phi$, and $\psi$ are sufficiently smooth functions in their respective domains.

Unraveling the analytical solutions to PDEs entailing fractional order derivatives often poses an intricate challenge. Hence, the need arises to adapt numerical techniques as a means of approximating solutions to these complex equations. While research on numerical approximations for solutions of fractional partial differential equations (FPDEs) in one spatial dimension has gained substantial traction in recent years, exploration of FPDEs with higher spatial dimensions remains relatively scarce. This underscores the significance of developing adept numerical techniques tailored for higher dimensional problems, fueling the pursuit of innovative solutions in this realm.

It is widely recognized that a typical solution to equation (\ref{1})-(\ref{2}) exhibits a singularity in proximity to the initial time $t= 0$. Additionally, it's derivatives meet the following regularity conditions given by \cite{stynes2017error}:
\begin{align}
	\left|\frac{\partial^{l}u}{\partial z^l}\right|&\leq \kappa_1,~ \text{for}~l=0,1,2,3,4,5,6,\label{4}\\
	\left|\frac{\partial^{l}u}{\partial t^l}\right|&\leq \kappa_2(1+t^{\alpha-l}),~ \text{for}~l=0,1,2,3,\label{5}
\end{align}
where constants $\kappa_1$ and $\kappa_2$ are positive that does not depend on both time ($t$) and spatial ($z=x,y$) variables. The expression given by Equation (\ref{5}) suggests that $u(x,y,t)$ exhibits a weak singularity at $t= 0$, resulting in the time derivative $\left|\frac{\partial u}{\partial t}\right|$ becoming unbounded as $t\rightarrow 0^+$. The presence of weak initial singularity poses substantial challenges, both in practical and theoretical aspects for traditional numerical techniques due to their inability to accurately capture the behavior of the solution in the vicinity of singular points. Therefore, devising efficient numerical methods that effectively address the singularity at $t = 0$ emerges as a captivating and challenging endeavor.

There exists a wide range of numerical schemes that have been put forth to address the complexities associated with solving the two-dimensional time-fractional convection-diffusion equations.
Cui \cite{cui2015compact} made a significant contribution by introducing a compact exponential ADI scheme that solves two-dimensional TFCD equation numerically.
Chen and Liu \cite{chen2008adi} proposed a novel technique for solving the two-dimensional fractional advection-dispersion equation, employing a combination of the ADI-Euler method and extrapolation method.  Wu and Zhai \cite{wu2020new} utilized Pad$\acute{e}$ approximation to develop a high-order finite difference scheme for solving the two-dimensional TFCD equation. The authors \cite{wang2016error} solved the two-dimensional TFCD equation by employing a combination of the L-1 method of order $2-\alpha$ for temporal discretization and a compact scheme in the spatial domain. Also, the work in \cite{tayebi2017meshless} introduces a meshless approach to tackle the two-dimensional time fractional advection-diffusion equation. Additionally, a second-order scheme for the 2D time TFCD equation was introduced by Zhang et al. in their work \cite{zhang2018approximation}.
The aforementioned schemes, being formulated on uniform meshes, may fall short in adequately addressing the singularity present in the solution at $t=0$. Consequently, there arises a critical need to devise a numerical scheme that can function optimally in a realistic scenario where the solution may not possess adequate smoothness. 

The utilization of fitted meshes has emerged as a powerful tool for effectively tackling the singularity exhibited by the solution. Strategically adapting the fitted meshes provides a means to accurately capture the behavior of the solution near $t=0$ (initial singularity), enabling more accurate and reliable numerical computations. This powerful technique has revolutionized the field by enhancing the accuracy and efficiency of numerical methods in handling challenging problems with singularities \cite{singh2023fully,stynes2017error,chen20191,chen2021two,cen2018numerical,chen2019error}. Even though nonuniform graded meshes have been seen in some works \cite{roul2022high,qiao2021fast}, the temporal convergence order achieved remains somewhat low. More importantly, we are not aware of any numerical method that gives high orders of convergence both in space and time for problem  (\ref{1})-(\ref{2}) with weak initial singularities.


In light of these considerations, the primary objective of this study is to develop and analyze a high-order fully discrete method that is specifically tailored to tackle the complexities associated with the two-dimensional time-fractional convection-diffusion equation, particularly when confronted with weak initial singularities. The proposed numerical approach combines Alikhanov's high-order scheme (L2-1$_\sigma$) for the discretization of the Caputo time-fractional derivative on a non-uniform fitted mesh, along with a high-order two-dimensional compact difference operator on a uniform mesh for spatial discretization. The resulting system is solved using a robust two-step Alternating Direction Implicit (ADI) approach. 
The comprehensive theoretical analysis, focusing on convergence and stability, is rigorously performed using the widely recognized Fourier analysis method. Importantly, it is worth highlighting that this specific scheme, as detailed herein, represents a novel contribution, as no prior literature has explored its applicability for numerically approximating the problem described by equations (\ref{1})-(\ref{2}).

Notation: In this paper, we utilize the symbol $c$ to represent a constant independent of the discretization parameters in space and time. Further, $c$ can assume distinct values depending on the specific context.

The remaining paper follows a structured approach, beginning with the development of the numerical method for the two-dimensional TFCD equation in Section \ref{sec:2}. The subsequent section, Section \ref{sec:3}, focuses on the stability and convergence analysis of the method. The final section, Section \ref{con}, concludes the article by summarizing the key findings.

\section{A high order fully discrete numerical scheme}\label{sec:2}
The following section introduces a novel high-order numerical technique for solving the two-dimensional time-fractional convection-diffusion (TFCD) equation (\ref{1})-(\ref{2}). To uphold fourth-order accuracy in space and the tridiagonal property of the method, we introduce a transformation as a prerequisite to eliminate the convection terms in the equation. Notably, this transformation is similar to the one detailed in \cite{zhai2014unconditionally}. Let
\begin{align}\label{6}
	u(x,y,t)=\frac{v(x,y,t)}{\mathcal P(x)\mathcal Q(y)},
\end{align}
where $\mathcal P(x)=\exp\left(\frac{\mu_1}{2\lambda_1}x\right)$ and $\mathcal Q(y)=\exp\left(\frac{\mu_2}{2\lambda_2}y\right)$.

By using equations (\ref{1})-(\ref{2}) and (\ref{6}), we derive the subsequent time-fractional problem, wherein $v = v(x, y, t)$ serves as the solution: 
\begin{align}\label{7}
	\partial^{\alpha}_{t}v=\lambda_1 v_{xx}+\lambda_2 v_{yy}-\beta v+F,~~  ~\mbox{in}~\Omega \times (0,T_f],~\Omega \subset \mathds{R}^2,
\end{align}
with initial and boundary conditions
\begin{equation}\label{8}
	\left\{%
	\begin{array}{ll}
		v(x,y,0)=\tilde \phi(x,y),&(x,y)\in \bar{\Omega}=\Omega\cup \partial\Omega,\\
		v(x,y,t)=\tilde \psi(x,y,t),&({x},y)\in \partial\Omega,~0<t\leq T_f,
	\end{array}%
	\right.
\end{equation}
where $\beta=\frac{\mu_1^2}{4\lambda_1}+\frac{\mu_2^2}{4\lambda_2}-\gamma$, $F=\mathcal P(x)\mathcal Q(y)f$, $\tilde \phi=\mathcal P(x)\mathcal Q(y)\phi$, and $\tilde \psi=\mathcal P(x)\mathcal Q(y)\psi$.

Here, the direct implication is evident: $u(x, y, t)$ is a solution of (\ref{1})-(\ref{2}) if and only if $v(x, y, t) = \mathcal P(x)\mathcal Q(y)u(x, y, t)$ is a solution of (\ref{7})-(\ref{8}). Further, the subsequent step entails a detailed examination solely focused on problem (\ref{7})-(\ref{8}).

\subsection{\bf{High-order discretization of time-fractional derivative}}
Suppose $N_t$ and $\hat {N_t}$ are positive integers, satisfying the conditions $\hat {N_t}\leq N_t$ and $\hat {N_t}\geq cN_t$, where $c$ is a fixed constant in $(0,1)$. We partition the interval $[0, T_f]$ into two subintervals $[0, T]$ and $[T, T_f]$, where $T\in (0,T_f]$ is arbitrary. To divide $[0,T]$, we employ a graded mesh with mesh points denoted by $t_i = T\left( \frac{i}{\hat {N_t}} \right)^{\theta}$ for $0\leq i\leq \hat {N_t}$. The user has the flexibility to select the grading parameter $\theta$ subject to the constraint $\theta\geq1$. Further, for ease of implementation, we consider the quasiuniform mesh on the interval $[T, T_f]$ (i.e., the ratio $\max_i \tau_i/ \min_i \tau_i$ is always kept within a constant limit), with mesh points $T= t_{\hat {N_t}} < t_{\hat {N_t}+1} < \dots < t_{{N_t}-1} < t_{N_t} = T_f$. The term \enquote{fitted} is used to describe this mesh because, through the proper choice of the grading parameter ${\theta}$, the graded section of the mesh adeptly manages functions with a weak singularity at $t = 0$.

Further, set $\tau_i=t_i-t_{i-1}$, for $1\leq i\leq {N_t}$ and $t_{i+\sigma}=t_i+\sigma\tau_{i+1}$, for $0\leq i\leq {N_t-1}$.
By employing Alikhanov’s formula (L2-1$_\sigma$), we approximate the Caputo derivative of function $g\in C[0,T_f]\cap C^3(0,T_f]$ at $t_{i+\sigma}$ given in \cite{chen2019error}
\begin{align}\label{9}
	\partial^{\alpha}_{t}g(t_{i+\sigma})= w^{(\alpha,\sigma)}_{i,i}g(t_{i+1})-\sum_{k=1}^{i}\left(w^{(\alpha,\sigma)}_{i,k}-w^{(\alpha,\sigma)}_{i,k-1}\right)g(t_{k})-w^{(\alpha,\sigma)}_{i,0}g(t_0)+(\mathcal R_t^{\alpha})^{i+\sigma},
\end{align}
where $(\mathcal R_t^{\alpha})^{i+\sigma}$ is the local truncation error term and the coefficients $w^{(\alpha,\sigma)}$'s are given as: $w^{(\alpha,\sigma)}_{0,0}=\tau_1^{-1}r^{(\alpha,\sigma)}_{0,0},$ and for $i\geq1,$
\begin{equation}\label{10}
	\displaystyle w^{(\alpha,\sigma)}_{i,k}= \begin{cases} \tau_{k+1}^{-1}\left(r^{(\alpha,\sigma)}_{i,0}+s^{(\alpha,\sigma)}_{i,0}\right), & k=0,\vspace{0.1cm}\\
		\tau_{k+1}^{-1}\left(r^{(\alpha,\sigma)}_{i,k}+s^{(\alpha,\sigma)}_{i,k-1}-s^{(\alpha,\sigma)}_{i,k}\right), & 1\leq k\leq i-1,\vspace{0.1cm}\\
		\tau_{k+1}^{-1}\left(r^{(\alpha,\sigma)}_{i,i}+s^{(\alpha,\sigma)}_{i,i-1}\right), & k=i,
	\end{cases}
\end{equation}
with  \begin{align}\label{11}
	&r^{(\alpha,\sigma)}_{i,i}=\frac{\sigma^{1-\alpha}}{\Gamma(2-\alpha)}\tau_{i+1}^{1-\alpha},~\text{for}~ i\geq 0,	
\end{align}
and for $i\geq1$, $0\leq k\leq i-1,$
\begin{align}
	r^{(\alpha,\sigma)}_{i,k}=&\frac{1}{\Gamma(1-\alpha)}\int_{t_k}^{t_{k+1}}(t_{i+\sigma}-\nu)^{-\alpha}d\nu,\label{12}\\
	s^{(\alpha,\sigma)}_{i,k}=&\frac{1}{\Gamma(1-\alpha)}\frac{2}{(t_{k+2}-t_k)}\int_{t_k}^{t_{k+1}}(t_{i+\sigma}-\nu)^{-\alpha}(\nu-t_{k+1/2})d\nu.\label{13}
\end{align}
\begin{lemma}\label{L:1}
	\emph{\cite{chen2019error}}	Suppose $1-{\alpha}/{2}\leq\sigma\leq1$ and the local mesh ratio $\eta_i = \frac{\tau_{i+1}}{\tau_i}$ for $1\leq i\leq N_t-1$ satisfies $3/4 \leq\eta_i\leq62.$ Then
	\begin{align*}
		&\emph{(i).}~w^{(\alpha,\sigma)}_{i,0}>\frac{t_{i+\sigma}^{-\alpha}}{\Gamma(1-\alpha)}>0,~i\geq 0.~~~~~~~~~~~~~~~~~~~~~~~~~~~~~~~~~~~~~~~~~~~~~~~~~~~~~~~~~~~~~~~~~~~~~~{}\\
		&	\emph{(ii).}~(2\sigma-1)w^{(\alpha,\sigma)}_{1,1}-\sigma w^{(\alpha,\sigma)}_{1,0}>0.\\
		&	\emph{(iii).}~w^{(\alpha,\sigma)}_{i,1}>w^{(\alpha,\sigma)}_{i,0},~~i\geq 1.\\
		&	\emph{(iv).}~\mbox{If} ~\eta_{k-1}^2(\eta_{k-1}+1) \geq \frac{\eta_{k}}{\eta_{k}+1}~for~2\leq k\leq i,~with~i\geq2,~then~ w^{(\alpha,\sigma)}_{i,k-1}<w^{(\alpha,\sigma)}_{i,k}.\\
			&	\emph{(v).}~\mbox{If} ~\eta_{i-1}^2\left(2-\frac{1}{\sigma}+\eta_{i}(\eta_{i}+2)\right) \geq \frac{\eta_{i}(\eta_{i}+1)}{\eta_{i-1}+1}~for~2\leq i\leq N_t,~then~ (2\sigma-1)w^{(\alpha,\sigma)}_{i,i}-\sigma w^{(\alpha,\sigma)}_{i,i-1}>0.
	\end{align*}
\end{lemma}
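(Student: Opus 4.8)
The plan is to work entirely from the closed-form integral representations (\ref{11})--(\ref{13}), reducing each statement to an elementary inequality in $\sigma$ and the local ratios $\eta_i$. As a preliminary step I would evaluate $r^{(\alpha,\sigma)}_{i,k}$ in closed form: since $\int_{t_k}^{t_{k+1}}(t_{i+\sigma}-\nu)^{-\alpha}d\nu=\frac{1}{1-\alpha}\left[(t_{i+\sigma}-t_k)^{1-\alpha}-(t_{i+\sigma}-t_{k+1})^{1-\alpha}\right]$, one gets $r^{(\alpha,\sigma)}_{i,k}=\frac{1}{\Gamma(2-\alpha)}\left[(t_{i+\sigma}-t_k)^{1-\alpha}-(t_{i+\sigma}-t_{k+1})^{1-\alpha}\right]>0$. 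For the $s$-coefficients I would record that the kernel $\nu\mapsto(t_{i+\sigma}-\nu)^{-\alpha}$ is increasing (and convex) on each subinterval while the weight $\nu-t_{k+1/2}$ is antisymmetric about the midpoint; pairing symmetric points shows the larger kernel values multiply the positive part of the weight, so $s^{(\alpha,\sigma)}_{i,k}>0$. These two observations are the backbone of the whole argument.

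For (i) I would split the index. When $i=0$, the identities $t_\sigma=\sigma\tau_1$ and $r^{(\alpha,\sigma)}_{0,0}=\frac{\sigma^{1-\alpha}}{\Gamma(2-\alpha)}\tau_1^{1-\alpha}$ reduce the claim to $\sigma/(1-\alpha)>1$, which holds because $\sigma\geq 1-\alpha/2>1-\alpha$. When $i\geq1$, the mean value theorem applied to $r^{(\alpha,\sigma)}_{i,0}$ gives $\tau_1^{-1}r^{(\alpha,\sigma)}_{i,0}=\xi^{-\alpha}/\Gamma(1-\alpha)$ for some $\xi\in(t_{i+\sigma}-t_1,t_{i+\sigma})$, whence $\xi^{-\alpha}>t_{i+\sigma}^{-\alpha}$; adding the positive term $\tau_1^{-1}s^{(\alpha,\sigma)}_{i,0}$ produces the strict bound. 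Statements (ii) and (iii) I would treat by substituting the explicit forms of $r^{(\alpha,\sigma)}_{1,0},r^{(\alpha,\sigma)}_{1,1}$ (resp. $r^{(\alpha,\sigma)}_{i,1},r^{(\alpha,\sigma)}_{i,0}$) into (\ref{10}) and invoking $s^{(\alpha,\sigma)}_{i,k}>0$; each then reduces to an inequality in the admissible ratios that can be verified over $[3/4,62]$ using $2\sigma-1\geq 1-\alpha>0$ and the monotonicity of the kernel.

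The genuinely delicate parts are (iv) and (v), since both hinge on the mixed combination $r^{(\alpha,\sigma)}_{i,k}+s^{(\alpha,\sigma)}_{i,k-1}-s^{(\alpha,\sigma)}_{i,k}$, in which the two $s$-terms are integrals over adjacent subintervals of comparable length and therefore nearly cancel. My plan is to write $s^{(\alpha,\sigma)}_{i,k-1}-s^{(\alpha,\sigma)}_{i,k}$ as a single expression and estimate it through a Taylor/mean-value expansion of the kernel about the common node $t_k$, so that its leading contribution is carried by the increments $\tau_k,\tau_{k+1}$, hence by $\eta_{k-1},\eta_k$. Inserting these estimates into $w^{(\alpha,\sigma)}_{i,k-1}$ and $w^{(\alpha,\sigma)}_{i,k}$ (resp. into $(2\sigma-1)w^{(\alpha,\sigma)}_{i,i}-\sigma w^{(\alpha,\sigma)}_{i,i-1}$) should convert the required sign into exactly the algebraic hypotheses $\eta_{k-1}^2(\eta_{k-1}+1)\geq \eta_k/(\eta_k+1)$ and $\eta_{i-1}^2\left(2-\tfrac1\sigma+\eta_i(\eta_i+2)\right)\geq \eta_i(\eta_i+1)/(\eta_{i-1}+1)$ quoted in the statement.

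The main obstacle I anticipate is precisely this control of $s^{(\alpha,\sigma)}_{i,k-1}-s^{(\alpha,\sigma)}_{i,k}$: because each $s$-term carries the antisymmetric weight and the normalizing factor $2/(t_{k+2}-t_k)$, a crude bound destroys the cancellation and hence the sharp constants $3/4$ and $62$. The estimate must retain the kernel's convexity to first order while keeping uniform control of the remainder across the entire admissible ratio window, and it is this simultaneous sharpness-and-uniformity demand, rather than any single computation, that makes (iv) and (v) the technical heart of the lemma. Since the statement is quoted from \cite{chen2019error}, I would finally cross-check the reduced inequalities against the constants recorded there.
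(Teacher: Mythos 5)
The paper itself does not prove Lemma \ref{L:1}: it is imported verbatim from \cite{chen2019error}, so there is no in-paper argument to compare against. Judged on its own terms, your proposal picks the right general strategy --- the one actually used in the cited source, namely reducing everything to the integral representations (\ref{11})--(\ref{13}) and then to elementary inequalities in $\sigma$ and the ratios $\eta_i$ --- and your treatment of part (i) is complete and correct: the $i=0$ case reduces to $\sigma>1-\alpha$ via $\Gamma(2-\alpha)=(1-\alpha)\Gamma(1-\alpha)$, and the $i\geq1$ case follows from the mean value theorem applied to $r^{(\alpha,\sigma)}_{i,0}$ together with $s^{(\alpha,\sigma)}_{i,0}>0$. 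The symmetrization argument for $s^{(\alpha,\sigma)}_{i,k}>0$ is also sound.

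For parts (ii)--(v), however, the proposal is a plan rather than a proof, and the gap is real. Two concrete problems. First, positivity of the $s$-terms is not enough even for (ii) and (iii): in $(2\sigma-1)w^{(\alpha,\sigma)}_{1,1}-\sigma w^{(\alpha,\sigma)}_{1,0}$ the coefficient multiplying $s^{(\alpha,\sigma)}_{1,0}$ is $(2\sigma-1)\tau_2^{-1}-\sigma\tau_1^{-1}$, which is negative whenever $\eta_1>(2\sigma-1)/\sigma$ (a range well inside the admissible window $[3/4,62]$), so one needs an \emph{upper} bound on $s^{(\alpha,\sigma)}_{1,0}$ relative to the $r$-terms, not merely its sign; likewise $w^{(\alpha,\sigma)}_{i,1}$ for $i\geq2$ already contains the difference $s^{(\alpha,\sigma)}_{i,0}-s^{(\alpha,\sigma)}_{i,1}$, so (iii) requires the same cancellation machinery you reserve for (iv). Second, the proposed Taylor/mean-value expansion of the kernel about $t_k$ cannot by itself ``convert the required sign into exactly'' the stated hypotheses: those hypotheses are exact, $\alpha$-independent algebraic conditions on the $\eta$'s, whereas a Taylor expansion leaves an $\alpha$-dependent remainder whose absorption would alter the constants. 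The argument in \cite{chen2019error} instead rewrites each $w^{(\alpha,\sigma)}_{i,k}$ as an integral of the kernel against an explicit piecewise-polynomial density and compares densities pointwise using the monotonicity and convexity of $\nu\mapsto(t_{i+\sigma}-\nu)^{-\alpha}$. Without carrying out that (or an equivalent) computation, the proofs of (ii)--(v) are missing --- and your closing sentence, which defers the reduced inequalities to the reference, effectively concedes as much.
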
	
At point $t=t_{i+\sigma},$ equation (\ref{7}) takes the form
\begin{align}\label{14}
	\nonumber	\partial^{\alpha}_{t}v^{i+\sigma}(x,y)=\lambda_1 v_{xx}^{i+\sigma}(x,y)+\lambda_2 v_{yy}^{i+\sigma}&(x,y)-\beta v^{i+\sigma}(x,y)+F^{i+\sigma}(x,y),\\& ~~({x},y)\in \Omega,~ i=0,1,\dots,N_t-1,
\end{align}
with initial and boundary conditions
\begin{equation}\label{15}
	\left\{%
	\begin{array}{ll}
		u^{0}(x,y)=\tilde\phi(x,y),&(x,y)\in \bar{\Omega}=\Omega\cup \partial\Omega,\\
		u^i(x,y)=\tilde\psi(x,y,t_i),&({x},y)\in \partial\Omega, i=1,2,\dots,N_t,
	\end{array}%
	\right.
\end{equation}
where we denote $u^{i}(x,y)=u(x,y,t_i)$.

Employing the L2-1$_\sigma$ approximation on a non-uniform mesh, as described in equation (\ref{9}), and incorporating it into equation (\ref{14}), yields the following semi discrete scheme
\begin{align}\label{16}
	\nonumber	w^{(\alpha,\sigma)}_{i,i}&v^{i+1}(x,y)-\sum_{k=1}^{i}\left(w^{(\alpha,\sigma)}_{i,k}-w^{(\alpha,\sigma)}_{i,k-1}\right)v^{k}(x,y)-w^{(\alpha,\sigma)}_{i,0}v^{0}(x,y)+(\mathcal R_t^{\alpha})^{i+\sigma}=\lambda_1 v_{xx}^{i+\sigma}(x,y)\\&+\lambda_2 v_{yy}^{i+\sigma}(x,y)-\beta v^{i+\sigma}(x,y)+F^{i+\sigma}(x,y),~~ ~({x},y)=:\Omega,~ i=0,1,\dots,N_t-1.
\end{align}
\subsection{\bf{High-order compact alternating direction implicit (ADI) scheme}}

Now, our subsequent strategy for solving the two-dimensional problem (\ref{16}) with initial and boundary conditions (\ref{15}) entails the discretization of the spatial variables through the utilization of a compact finite difference scheme. The compact finite difference method is a well-established numerical technique extensively utilized in solving PDEs. It has been studied and applied in various scientific and engineering fields. Numerous research works have demonstrated the effectiveness and advantages of the high-order compact finite difference method \cite{dablain1986application,liao2010error, cui2009compact, gao2011compact, ran2018new, sumit2022optimal, ferras2021high,kumar2022high,singhhigh}. For given positive integers $M_x$ and $M_y$, we discretize the problem using $x_m = mh_x$ and 
$y_n = nh_y$, where $0\leq m\leq M_x$ and 
$0\leq n\leq  M_y$. 
Here,  $h_x=\frac{L}{M_x}$ is the step size in the $x$ direction, and $h_y=\frac{L}{M_y}$ is the step size in the $y$ direction.	Let $v=\{v^i_{m,n}~|~ 0\leq m\leq M_x, 0\leq n\leq  M_y,~\text{and}~ 
1\leq i\leq  N_t$\}. We introduce spatial difference operators
\begin{equation}\label{17}
	\displaystyle  \mathcal H_xv^i_{m,n}= \begin{cases} \left(I+\frac{h_x^{2}}{12}\delta_{{x}}^{2}\right)v_{m,n}^{i}, & 1\leq m \leq M_{{x}}-1,\\
		v^i_{m,n}, & m=0\hspace{0.1cm} \text{or} \hspace{0.1cm} M_{x},
	\end{cases}
\end{equation}

\begin{equation}\label{18}
	\displaystyle\mathcal H_yv^i_{m,n}= \begin{cases} \left(I+\frac{h_y^{2}}{12}\delta_{{y}}^{2}\right)v_{m,n}^{i}, & 1\leq n \leq M_{y}-1,\\
		v^i_{m,n}, & n=0\hspace{0.1cm} \text{or} \hspace{0.1cm} M_{y},
	\end{cases}
\end{equation}
where
\begin{align}\label{19}
	\delta_{{x}}^{2}v^i_{m,n}=\frac{v^i_{m+1,n}-2v^i_{m,n}+v^i_{m-1,n}}{h_x^{2}}~~\text{and}~~	\delta_{y}^{2}v^i_{m,n}=\frac{v^i_{m,n+1}-2v^i_{m,n}+v^i_{m,n-1}}{h_y^{2}}.
\end{align}
Now, re-write equation (\ref{16}) at $(x_m,y_n)$ in the following form

\begin{align}\label{20}
	-\lambda_1 v_{xx}^{i+\sigma}(x_m,y_n)-\lambda_2 v_{yy}^{i+\sigma}(x_m,y_n)=G^{i+\sigma}(x_m,y_n)+(\mathcal R_t^{\alpha})^{i+\sigma},
\end{align}
where
\begin{align}\label{21}
	\nonumber G^{i+\sigma}(x_m,y_n)=&~F^{i+\sigma}(x_m,y_n)-w^{(\alpha,\sigma)}_{i,i}v^{i+1}(x_m,y_n)+\sum_{k=1}^{i}\left(w^{(\alpha,\sigma)}_{i,k}-w^{(\alpha,\sigma)}_{i,k-1}\right)v^{k}(x_m,y_n)\\&~~+w^{(\alpha,\sigma)}_{i,0}v^{0}(x_m,y_n)-\beta v^{i+\sigma}(x_m,y_n).
\end{align}
In accordance with the techniques presented in \cite{liao2010maximum,zhang2011alternating}, we develop a fourth-order compact scheme for equation (\ref{20}) as follows
\begin{align}\label{22}
	-\lambda_1\mathcal H_x^{-1}	\delta_{{x}}^{2}v^{i+\sigma}_{m,n}-\lambda_2   \mathcal H_y^{-1}	\delta_{y}^{2}v^{i+\sigma}_{m,n}=G^{i+\sigma}_{m,n}+(\mathcal R_t^{\alpha})^{i+\sigma}+(\mathcal R_l)^{i+\sigma}_{m,n},
\end{align}
where 
\begin{align*}
	(\mathcal R_l)^{i+\sigma}_{m,n}=(\mathcal R_x)^{i+\sigma}_{m,n}+(\mathcal R_y)^{i+\sigma}_{m,n},
\end{align*}
with $\varphi(\eta)=-3(1-\eta)^5+5(1-\eta)^3,$ and
\begin{align*}
	&(\mathcal R_x)^{i+\sigma}_{m,n}=\frac{h_x^4}{360}\int_0^1\left(  \frac{\partial^6v}{\partial x^6}(x_m-\eta h_x,y_n,t_{i+\sigma})+\frac{\partial^6v}{\partial x^6}(x_m+\eta h_x,y_n,t_{i+\sigma})\right)\varphi(\eta)d\eta,\\
	&(\mathcal R_y)^{i+\sigma}_{m,n}=\frac{h_y^4}{360}\int_0^1\left(  \frac{\partial^6v}{\partial y^6}(x_m,y_n-\eta h_y,t_{i+\sigma})+\frac{\partial^6v}{\partial y^6}(x_m,y_n+\eta h_y,t_{i+\sigma})\right)\varphi(\eta)d\eta.
\end{align*}
Thus, we have 
\begin{align}\label{23}
	(\mathcal R_l)^{i+\sigma}_{m,n}\leq c~ (h_x^4+h_y^4).
\end{align}
By multiplying both sides of equation (\ref{22}) with the operator $\mathcal H_x \mathcal H_y$, we obtain
\begin{align}\label{24}
	(-\lambda_1  \mathcal H_y	\delta_{{x}}^{2}-\lambda_2   \mathcal H_x	\delta_{y}^{2})v^{i+\sigma}_{m,n}= \mathcal H_x \mathcal H_yG^{i+\sigma}_{m,n}+(\mathcal R_t^{\alpha})^{i+\sigma}+(\mathcal R_l)^{i+\sigma}_{m,n}.
\end{align}

\begin{lemma}\label{L:2}
	\emph{\cite{chen2019error}}	For any function $g(t) \in C^2(0, T_f],$ one has
	\begin{align*}
		\left|g(t_{i+\sigma})-\{\sigma g(t_{i+1})+(1-\sigma) g(t_{i})\}\right|\leq \frac{\tau_{i+1}^2}{8}\mathop {\max}_{1\leq i \leq N_{t}}|g^{\prime\prime}(t_i)|.
	\end{align*}
\end{lemma}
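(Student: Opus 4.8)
The plan is to recognize that $\sigma g(t_{i+1})+(1-\sigma)g(t_i)$ is precisely the linear interpolant of $g$ at the nodes $t_i$ and $t_{i+1}$, evaluated at the intermediate point $t_{i+\sigma}$. Indeed $t_{i+\sigma}=t_i+\sigma\tau_{i+1}=(1-\sigma)t_i+\sigma t_{i+1}$ is a convex combination of $t_i$ and $t_{i+1}$ for $\sigma\in(0,1)$, so the quantity to be estimated is nothing but the pointwise linear-interpolation error, and the task reduces to controlling it in terms of $g''$.

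First I would fix $i$ and Taylor-expand $g(t_{i+1})$ and $g(t_i)$ about $t_{i+\sigma}$ with the integral form of the remainder,
\begin{align*}
  g(t_{i+1}) &= g(t_{i+\sigma}) + (1-\sigma)\tau_{i+1}\,g'(t_{i+\sigma}) + \int_{t_{i+\sigma}}^{t_{i+1}}(t_{i+1}-s)\,g''(s)\,ds,\\
  g(t_{i}) &= g(t_{i+\sigma}) - \sigma\tau_{i+1}\,g'(t_{i+\sigma}) + \int_{t_{i+\sigma}}^{t_{i}}(t_{i}-s)\,g''(s)\,ds,
\end{align*}
where I used $t_{i+1}-t_{i+\sigma}=(1-\sigma)\tau_{i+1}$ and $t_i-t_{i+\sigma}=-\sigma\tau_{i+1}$. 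Forming the combination $\sigma g(t_{i+1})+(1-\sigma)g(t_i)$, the zeroth-order terms sum to $g(t_{i+\sigma})$ and the first-order terms cancel, since the coefficient of $g'(t_{i+\sigma})$ is $\sigma(1-\sigma)\tau_{i+1}-(1-\sigma)\sigma\tau_{i+1}=0$. Only the two remainder integrals survive, so that
\begin{align*}
  g(t_{i+\sigma})-\{\sigma g(t_{i+1})+(1-\sigma)g(t_i)\}
  = -\sigma\!\int_{t_{i+\sigma}}^{t_{i+1}}\!(t_{i+1}-s)\,g''(s)\,ds
    -(1-\sigma)\!\int_{t_{i+\sigma}}^{t_{i}}\!(t_{i}-s)\,g''(s)\,ds.
\end{align*}

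Finally I would bound each integral by pulling $|g''|$ out under its maximum over $[t_i,t_{i+1}]$ and integrating the nonnegative weight functions, which give $\tfrac12(1-\sigma)^2\tau_{i+1}^2$ and $\tfrac12\sigma^2\tau_{i+1}^2$ respectively. Collecting terms yields the factor $\tfrac12\sigma(1-\sigma)\big[(1-\sigma)+\sigma\big]\tau_{i+1}^2=\tfrac12\sigma(1-\sigma)\tau_{i+1}^2$. The main (and essentially the only) point needing care is the last optimization: the sharp constant $\tfrac18$ comes solely from $\max_{\sigma\in[0,1]}\sigma(1-\sigma)=\tfrac14$, so the $\sigma$-dependence must be kept explicit rather than each interval length crudely replaced by $\tau_{i+1}$. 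An equivalent route is the classical interpolation-error identity $g(t)-p(t)=\tfrac12 g''(\xi)(t-t_i)(t-t_{i+1})$ evaluated at $t=t_{i+\sigma}$, where $(t_{i+\sigma}-t_i)(t_{i+\sigma}-t_{i+1})=-\sigma(1-\sigma)\tau_{i+1}^2$ reproduces the same factor. A minor subtlety is that $s$ (or $\xi$) ranges over the open interval rather than a mesh node, so $|g''|$ is controlled by its maximum over $[t_i,t_{i+1}]$, which is what the right-hand side abbreviates.
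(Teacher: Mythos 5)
Your argument is correct: the Taylor expansions about $t_{i+\sigma}$ with integral remainder, the cancellation of the first-order terms, the weights $\tfrac12(1-\sigma)^2\tau_{i+1}^2$ and $\tfrac12\sigma^2\tau_{i+1}^2$, and the final optimization $\max_{\sigma\in[0,1]}\sigma(1-\sigma)=\tfrac14$ yielding the constant $\tfrac18$ are all right, and you correctly identify that last step as the only place where care is needed. Note that the paper itself offers no proof to compare against --- Lemma \ref{L:2} is imported verbatim from the cited reference --- so your derivation simply supplies the standard linear-interpolation error bound that the paper takes for granted; your closing remark that the $\max_{1\leq i\leq N_t}|g''(t_i)|$ on the right-hand side must be read as a supremum of $|g''|$ over the relevant interval (rather than over mesh nodes) is a fair and necessary reading of the statement as printed.
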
	

Therefore, by utilizing Lemma \ref{L:2} along with equation (\ref{21}), into equation (\ref{24}), we obtain the following expression
\begin{align*}
	&\mu\left( \mathcal H_x \mathcal H_y-\frac{\sigma\lambda_1}{\mu} \mathcal H_y	\delta_{{x}}^{2}-\frac{\sigma\lambda_2}{\mu}  \mathcal H_x	\delta_{y}^{2}\right)v^{i+1}_{m,n}=\sum_{k=1}^{i}\left(w^{(\alpha,\sigma)}_{i,k}-w^{(\alpha,\sigma)}_{i,k-1}\right)\mathcal H_x\mathcal H_yv^{k}_{m,n}+w^{(\alpha,\sigma)}_{i,0}\mathcal H_x\mathcal H_yv^{0}_{m,n}\\&-\beta(1-\sigma) \mathcal H_x\mathcal H_yv^{i}_{m,n}+(1-\sigma)(\lambda_1 \mathcal H_y	\delta_{{x}}^{2}+\lambda_2  \mathcal H_x	\delta_{y}^{2})v^{i}_{m,n}+\mathcal H_x\mathcal H_yF^{i+\sigma}_{m,n}+(\mathcal R_t^{\alpha})^{i+\sigma}+(\mathcal R_l)^{i+\sigma}_{m,n}+\mathcal R_t^{i+1},
\end{align*}
where, $\mu=\beta\sigma+w^{(\alpha,\sigma)}_{i,i}>0$. Also, by invoking Lemma \ref{L:2}, we can obtain a bound for $\mathcal R_t^{i+1}$.

After re-arranging the terms of above equation, we get
\begin{align}\label{25}
	\nonumber	&~~~\left(\mathcal H_x\mathcal H_y-\frac{\sigma\lambda_1}{\mu} \mathcal H_y	\delta_{{x}}^{2}-\frac{\sigma\lambda_2}{\mu}  \mathcal H_x	\delta_{y}^{2}\right)v^{i+1}_{m,n}=\frac{1}{\mu}\sum_{k=1}^{i}\left(w^{(\alpha,\sigma)}_{i,k}-w^{(\alpha,\sigma)}_{i,k-1}\right)\mathcal H_x\mathcal H_yv^{k}_{m,n}\\
	\nonumber&~~~~~~~+\frac{w^{(\alpha,\sigma)}_{i,0}}{\mu}\mathcal H_x\mathcal H_yv^{0}_{m,n}-\frac{\beta(1-\sigma)}{\mu}  \mathcal H_x\mathcal H_yv^{i}_{m,n}+\frac{(1-\sigma)}{\mu}\left(\lambda_1 \mathcal H_y	\delta_{{x}}^{2}+\lambda_2  \mathcal H_x	\delta_{y}^{2}\right)v^{i}_{m,n}\\&~~~~~~~~~~~+\frac{1}{\mu}\mathcal H_x\mathcal H_yF^{i+\sigma}_{m,n}+\frac{(\mathcal R_t^{\alpha})^{i+\sigma}}{\mu}+\frac{(\mathcal R_l)^{i+\sigma}_{m,n}}{\mu}+\frac{\mathcal R_t^{i+1}}{\mu}.
\end{align}
Further, to devise an efficient ADI scheme, we incorporate the following perturbation term 
\begin{align}\label{26}
	\frac{\lambda_1\lambda_2\sigma^2}{\mu^2}\delta_{x}^{2}\delta_{y}^{2}(v^{i+1}_{m,n}-v^{i}_{m,n})=(\mathcal R_p)_{m,n}^{i+1}.
\end{align}
Now, adding this perturbation term in equation (\ref{25}), we get
\begin{align}\label{27}
	\nonumber&~~~\left(\mathcal H_x-\frac{\sigma\lambda_1}{\mu}\delta_{x}^{2}\right)\left(\mathcal H_y-\frac{\sigma\lambda_2}{\mu}\delta_{y}^{2}\right)v^{i+1}_{m,n}=\frac{1}{\mu}\sum_{k=1}^{i}\left(w^{(\alpha,\sigma)}_{i,k}-w^{(\alpha,\sigma)}_{i,k-1}\right)\mathcal H_x\mathcal H_yv^{k}_{m,n}\\\nonumber&~~~~~+\frac{w^{(\alpha,\sigma)}_{i,0}}{\mu}\mathcal H_x\mathcal H_yv^{0}_{m,n}-\frac{\beta(1-\sigma)}{\mu}  \mathcal H_x\mathcal H_yv^{i}_{m,n}+\frac{(1-\sigma)}{\mu}\left(\lambda_1 \mathcal H_y	\delta_{{x}}^{2}+\lambda_2  \mathcal H_x	\delta_{y}^{2}\right)v^{i}_{m,n}\\&~~~~~~~~~~~+\frac{1}{\mu}\mathcal H_x\mathcal H_yF^{i+\sigma}_{m,n}+\frac{\lambda_1\lambda_2\sigma^2}{\mu^2}\delta_{x}^{2}\delta_{y}^{2}v^{i}_{m,n}+ (\mathcal R_f)_{m,n}^{i+1},
\end{align}
where
\begin{align}\label{28}
	(\mathcal R_f)_{m,n}^{i+1}=\frac{(\mathcal R_t^{\alpha})^{i+\sigma}}{\mu}+\frac{(\mathcal R_l)^{i+\sigma}_{m,n}}{\mu}+\frac{\mathcal R_t^{i+1}}{\mu}+(\mathcal R_p)_{m,n}^{i+1}.
\end{align}
By omitting the error term in equation (\ref{27}), we derive the following fully discrete numerical scheme
\begin{align}\label{29}
	\nonumber&~~~\left(\mathcal H_x-\frac{\sigma\lambda_1}{\mu}\delta_{x}^{2}\right)\left(\mathcal H_y-\frac{\sigma\lambda_2}{\mu}\delta_{y}^{2}\right)V^{i+1}_{m,n}=\frac{1}{\mu}\sum_{k=1}^{i}\left(w^{(\alpha,\sigma)}_{i,k}-w^{(\alpha,\sigma)}_{i,k-1}\right)\mathcal H_x\mathcal H_yV^{k}_{m,n}\\\nonumber&~~~~~~~+\frac{w^{(\alpha,\sigma)}_{i,0}}{\mu}\mathcal H_x\mathcal H_yV^{0}_{m,n}-\frac{\beta(1-\sigma)}{\mu}  \mathcal H_x\mathcal H_yV^{i}_{m,n}+\frac{(1-\sigma)}{\mu}\left(\lambda_1 \mathcal H_y	\delta_{{x}}^{2}+\lambda_2  \mathcal H_x	\delta_{y}^{2}\right)V^{i}_{m,n}\\&~~~~~~~~~~~~~~~~+\frac{1}{\mu}\mathcal H_x\mathcal H_yF^{i+\sigma}_{m,n}+\frac{\lambda_1\lambda_2\sigma^2}{\mu^2}\delta_{x}^{2}\delta_{y}^{2}V^{i}_{m,n},
\end{align}
where $V^{i}_{m,n}$ represents the numerical approximation of $v^{i}_{m,n}$.

Further, to enhance the ease of computations, we incorporate the intermediate variable $V_{m,n}^*$ defined by
\begin{align}\label{30}
	V_{m,n}^*=\left(\mathcal H_y-\frac{\sigma\lambda_2}{\mu}\delta_{y}^{2}\right)V^{i+1}_{m,n},~0\leq m \leq M_x,~1 \leq n \leq M_y-1.
\end{align}
Now, by employing a sequential two-step procedure, equation (\ref{29}) can be effectively solved as described below:

\noindent \textbf{Step (i):}  To determine $\{V_{m,n}^*\}$, we solve the following set of linear equations, for a fixed $n\in\{1,2,\dots M_y-1\},$
\begin{align}\label{31}
	\nonumber&\left(\mathcal H_x-\frac{\sigma\lambda_1}{\mu}\delta_{x}^{2}\right)	V_{m,n}^*=\frac{1}{\mu}\sum_{k=1}^{i}\left(w^{(\alpha,\sigma)}_{i,k}-w^{(\alpha,\sigma)}_{i,k-1}\right)\mathcal H_x\mathcal H_yV^{k}_{m,n}+\frac{w^{(\alpha,\sigma)}_{i,0}}{\mu}\mathcal H_x\mathcal H_yV^{0}_{m,n}\\\nonumber&~~-\frac{\beta(1-\sigma)}{\mu}  \mathcal H_x\mathcal H_yV^{i}_{m,n}+\frac{(1-\sigma)}{\mu}\left(\lambda_1 \mathcal H_y	\delta_{{x}}^{2}+\lambda_2  \mathcal H_x	\delta_{y}^{2}\right)V^{i}_{m,n}+\frac{1}{\mu}\mathcal H_x\mathcal H_yF^{i+\sigma}_{m,n}\\&~~~~+\frac{\lambda_1\lambda_2\sigma^2}{\mu^2}\delta_{x}^{2}\delta_{y}^{2}V^{i}_{m,n},~~1 \leq m\leq M_x-1,
\end{align}
with boundary conditions
\begin{align}\label{32}
	V_{0,n}^*=\left(\mathcal H_y-\frac{\sigma\lambda_2}{\mu}\delta_{y}^{2}\right)V^{i+1}_{0,n},~~V_{M_x,n}^*=\left(\mathcal H_y-\frac{\sigma\lambda_2}{\mu}\delta_{y}^{2}\right)V^{i+1}_{M_x,n},
\end{align}
where
\begin{align}\label{33}
	V^{i+1}_{0,n}=\tilde\psi(x_0,y_n,t_{i+1}),~~V^{i+1}_{M_x,n}=\tilde\psi(x_{M_x},y_n,t_{i+1}).
\end{align}

\noindent \textbf{Step (ii):} Once the $\{V_{m,n}^*\}$ values have been determined, we solve the subsequent set of linear equations to obtain the final solution $\{V_{m,n}^{i+1}\}$, for a fixed $m\in\{1,2,\dots M_x-1\},$ as follows
\begin{align}\label{34}
	\left(\mathcal H_y-\frac{\sigma\lambda_2}{\mu}\delta_{y}^{2}\right)V^{i+1}_{m,n}=V_{m,n}^*,~~1\leq n\leq M_y-1,
\end{align}
with boundary conditions
\begin{align}\label{35}
	V^{i+1}_{m,0}=\tilde\psi(x_m,y_0,t_{i+1}),~~V^{i+1}_{m,M_y}=\tilde\psi(x_m,y_{M_y},t_{i+1}).
\end{align}

\section{Theoretical analysis}\label{sec:3}
\noindent	We will now explore the truncation error, stability, and convergence aspects of the developed scheme in this section.

\subsection{\bf{Truncation error}}
Now, we proceed to estimate the truncation error, denoted as $(\mathcal R_f)_{m,n}^{i+1}$, of the scheme (\ref{29}).	

\begin{lemma}\label{L:3}
	Let $\sigma=1-\frac{\alpha}{2}.$ Assume that the local mesh ratio $\eta_i = \frac{\tau_{i+1}}{\tau_i}$ for $1\leq i\leq N_t-1$ satisfies $3/4 \leq\eta_i\leq62$. Then
	\begin{align*}
		\frac{1}{w^{(\alpha,\sigma)}_{i,i}}=\mathcal{O}(\tau_{i+1}^{\alpha}),~for~ 0\leq i\leq N_t-1.
	\end{align*}
\end{lemma}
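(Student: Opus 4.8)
The plan is to translate the claim into a two-sided order estimate for the diagonal weight itself: since $1/w^{(\alpha,\sigma)}_{i,i}=\mathcal O(\tau_{i+1}^{\alpha})$ is equivalent to the existence of constants $0<c_1\le c_2$ (independent of the mesh) with $c_1\tau_{i+1}^{-\alpha}\le w^{(\alpha,\sigma)}_{i,i}\le c_2\tau_{i+1}^{-\alpha}$, I would prove exactly these bounds. The case $i=0$ is immediate, because $w^{(\alpha,\sigma)}_{0,0}=\tau_1^{-1}r^{(\alpha,\sigma)}_{0,0}=\frac{\sigma^{1-\alpha}}{\Gamma(2-\alpha)}\tau_1^{-\alpha}$ by the definitions in (\ref{10})--(\ref{11}), so the order is exact. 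For $i\ge1$ I would use the $k=i$ branch of (\ref{10}) together with (\ref{11}) to write
\begin{align*}
	w^{(\alpha,\sigma)}_{i,i}=\tau_{i+1}^{-1}\!\left(r^{(\alpha,\sigma)}_{i,i}+s^{(\alpha,\sigma)}_{i,i-1}\right)=\frac{\sigma^{1-\alpha}}{\Gamma(2-\alpha)}\tau_{i+1}^{-\alpha}+\tau_{i+1}^{-1}s^{(\alpha,\sigma)}_{i,i-1},
\end{align*}
so the leading term already supplies a clean $\tau_{i+1}^{-\alpha}$ contribution, and everything reduces to controlling the single remainder term $s^{(\alpha,\sigma)}_{i,i-1}$.

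The first key step is to show $s^{(\alpha,\sigma)}_{i,i-1}>0$, which immediately yields the lower bound $w^{(\alpha,\sigma)}_{i,i}>\frac{\sigma^{1-\alpha}}{\Gamma(2-\alpha)}\tau_{i+1}^{-\alpha}$ and hence the upper bound $1/w^{(\alpha,\sigma)}_{i,i}<\frac{\Gamma(2-\alpha)}{\sigma^{1-\alpha}}\tau_{i+1}^{\alpha}$. Taking $k=i-1$ in (\ref{13}) and substituting $\nu=t_{i-1/2}+\xi$ with $\xi\in[-\tau_i/2,\tau_i/2]$ and $t_{i-1/2}=(t_{i-1}+t_i)/2$, the centered moment becomes $\int_{-\tau_i/2}^{\tau_i/2}(a-\xi)^{-\alpha}\,\xi\,d\xi$ with $a:=t_{i+\sigma}-t_{i-1/2}=\sigma\tau_{i+1}+\tau_i/2>0$. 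Pairing $\xi$ with $-\xi$ and using that $x\mapsto x^{-\alpha}$ is decreasing gives $(a-\xi)^{-\alpha}-(a+\xi)^{-\alpha}>0$ for $\xi>0$; since $\Gamma(1-\alpha)>0$ and the prefactor $2/(t_{i+1}-t_{i-1})>0$, positivity of $s^{(\alpha,\sigma)}_{i,i-1}$ follows.

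For the matching estimate that fixes the order from above, I would bound $s^{(\alpha,\sigma)}_{i,i-1}$ using $|\nu-t_{i-1/2}|\le\tau_i/2$, the kernel bound $(t_{i+\sigma}-\nu)^{-\alpha}\le(\sigma\tau_{i+1})^{-\alpha}$ on $[t_{i-1},t_i]$, and $\int_{t_{i-1}}^{t_i}|\nu-t_{i-1/2}|\,d\nu=\tau_i^2/4$, obtaining a factor $\tau_i^2/[\tau_{i+1}(\tau_i+\tau_{i+1})]=1/[\eta_i(1+\eta_i)]$. The mesh-ratio hypothesis $\eta_i\ge 3/4$ then bounds this by $16/21$, so $\tau_{i+1}^{-1}s^{(\alpha,\sigma)}_{i,i-1}\le C\,\tau_{i+1}^{-\alpha}$ and therefore $w^{(\alpha,\sigma)}_{i,i}\le c_2\,\tau_{i+1}^{-\alpha}$, which gives $1/w^{(\alpha,\sigma)}_{i,i}\ge c_2^{-1}\tau_{i+1}^{\alpha}$. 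Combining the two one-sided bounds establishes $1/w^{(\alpha,\sigma)}_{i,i}=\mathcal O(\tau_{i+1}^{\alpha})$ uniformly in $i$. I expect the main obstacle to be the positivity argument for $s^{(\alpha,\sigma)}_{i,i-1}$, since a crude estimate could wrongly allow it to be negative and cancel the leading term; the symmetric substitution and monotonicity of $x^{-\alpha}$ is what makes this step rigorous, while the upper bound is routine once the mesh-ratio constraint is invoked.
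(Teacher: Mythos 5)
Your proof is correct, and while it begins from the same decomposition the paper uses, namely
$w^{(\alpha,\sigma)}_{i,i}=\tau_{i+1}^{-1}\bigl(r^{(\alpha,\sigma)}_{i,i}+s^{(\alpha,\sigma)}_{i,i-1}\bigr)=\frac{\sigma^{1-\alpha}}{\Gamma(2-\alpha)}\tau_{i+1}^{-\alpha}+\tau_{i+1}^{-1}s^{(\alpha,\sigma)}_{i,i-1}$ with the $i=0$ case read off directly, it treats the remainder term by a genuinely different route. The paper evaluates $s^{(\alpha,\sigma)}_{i,i-1}$ in closed form, writes $w^{(\alpha,\sigma)}_{i,i}=\frac{\sigma^{1-\alpha}}{\Gamma(2-\alpha)}\varrho\,\tau_{i+1}^{-\alpha}$ for an explicit factor $\varrho$ depending on $\alpha$, $\sigma$, $\eta_i$, and then invokes numerical bounds $1.659\ldots\le\varrho\le13.215\ldots$ derived from $3/4\le\eta_i\le62$. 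You avoid the closed form entirely: the symmetric substitution $\nu=t_{i-1/2}+\xi$ together with the monotonicity of $x\mapsto x^{-\alpha}$ gives $s^{(\alpha,\sigma)}_{i,i-1}>0$, which alone yields $1/w^{(\alpha,\sigma)}_{i,i}\le\frac{\Gamma(2-\alpha)}{\sigma^{1-\alpha}}\tau_{i+1}^{\alpha}$ --- the only direction actually used later in (\ref{3.7})--(\ref{3.9}) --- and the crude estimates $(t_{i+\sigma}-\nu)^{-\alpha}\le(\sigma\tau_{i+1})^{-\alpha}$ and $\int_{t_{i-1}}^{t_i}|\nu-t_{i-1/2}|\,d\nu=\tau_i^2/4$, combined with $\eta_i\ge3/4$, give the matching bound $w^{(\alpha,\sigma)}_{i,i}\le c_2\tau_{i+1}^{-\alpha}$. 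What your route buys is robustness and a slightly stronger conclusion: you obtain a genuine two-sided estimate $w^{(\alpha,\sigma)}_{i,i}\asymp\tau_{i+1}^{-\alpha}$ with explicit constants, you need only the lower mesh-ratio bound $\eta_i\ge 3/4$, and you do not rely on the paper's quoted numerical interval for $\varrho$ --- which is just as well, since that interval's lower end cannot hold uniformly (because $s^{(\alpha,\sigma)}_{i,i-1}\tau_{i+1}^{\alpha-1}\to0$ as $\eta_i$ grows, one has $\varrho\to1^{+}$, so only $\varrho>1$, precisely what your positivity argument establishes, can be asserted in general). The paper's approach, in exchange, produces an exact expression for the diagonal weight that could be reused elsewhere.
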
	
\begin{proof}
	It is obvious for $i=0$ that 
	\begin{align*}
		w^{(\alpha,\sigma)}_{0,0}=\frac{\sigma^{1-\alpha}}{\Gamma(2-\alpha)}\frac{1}{\tau_1^{\alpha}},
	\end{align*}
	which imply
	\begin{align}\label{3.1}
		\left|\frac{1}{w^{(\alpha,\sigma)}_{0,0}}\right|\leq\frac{\Gamma(2-\alpha)}{\sigma^{1-\alpha}}{\tau_1^{\alpha}}.
	\end{align}
	For $i\geq1,$
	\begin{align*}
		w^{(\alpha,\sigma)}_{i,i}=&\frac{1}{\tau_{i+1}}\left(r^{(\alpha,\sigma)}_{i,i}+s^{(\alpha,\sigma)}_{i,i-1}\right)=\frac{1}{\tau_{i+1}^{\alpha}}\left[\frac{\sigma^{1-\alpha}}{\Gamma(2-\alpha)}+\frac{s^{(\alpha,\sigma)}_{i,i-1}}{\tau_{i+1}^{1-\alpha}}\right],
	\end{align*}
	which on solving yields
	\begin{align}\label{3.2}
		w^{(\alpha,\sigma)}_{i,i}=&\frac{\sigma^{1-\alpha}}{\Gamma(2-\alpha)}\frac{\varrho}{\tau_{i+1}^{\alpha}},
	\end{align}
	where
	\begin{align*}
		\varrho=1+\left(1+\frac{1}{\eta_i}\right)^{-1}\left[         \Biggl\{\left(1+\frac{1}{\sigma\eta_i}\right)^{2-\alpha}-1\Biggr\}-\frac{1}{\eta_i}\Biggl\{\left(1+\frac{1}{\sigma\eta_i}\right)^{1-\alpha}+1\Biggr\}\right],
	\end{align*}
with $\eta_i = \frac{\tau_{i+1}}{\tau_i}$. Further, by employing the bounds of $\eta_i$, where $1\leq i\leq N_t-1$, with values confined within the range $3/4 \leq \eta_i \leq 62$, as established in \cite{chen2019error}, we can derive the corresponding bounds of $\varrho$ as
	\begin{align}\label{3.3}
		1.6597542\leq \varrho\leq 13.215168.
	\end{align}
By utilizing the relation (\ref{3.1}) and incorporating the bounds of $\varrho$ as provided in (\ref{3.3}) into the expression (\ref{3.2}), we obtain the following result
	\begin{align*}
		\left|\frac{1}{w^{(\alpha,\sigma)}_{i,i}}\right|\leq c~\tau_{i+1}^{\alpha},
	\end{align*}
	where $c$ is positive constant.
	
	Thus, we have the Lemma.
\end{proof}
Now we define the following functions
\begin{equation}\label{3.4}
	\left\{
	\begin{array}{lll}
	Q_g^{\sigma}&=\tau_1^{\alpha}\displaystyle\sup_{\varepsilon\in(0,t_1)}\left(\varepsilon^{1-\alpha}|\delta_t g(t_0)-g^{\prime}(\varepsilon)|\right),\\
			Q_g^{i+\sigma}&=t_{i+\sigma}^{\alpha}\tau_{i+1}^{3-\alpha}\displaystyle\sup_{\varepsilon\in(t_i,t_{i+1})}|g^{\prime\prime\prime}(\varepsilon)|,~\text{for}~1\leq i\leq N_t-1,\\
		Q_g^{i,1}&=\tau_1^{\alpha}\displaystyle\sup_{\varepsilon\in(0,t_1)}\left(\varepsilon^{1-\alpha}|(I_{2,1}g(\varepsilon))^{\prime} -g^{\prime}(\varepsilon)|\right),~\text{for}~1\leq i\leq N_t-1,\\
			Q_g^{i,k}&=t_{k}^{\alpha}\tau_{i+1}^{-\alpha}\tau_{k}^2(\tau_{k+1}+\tau_{k})\displaystyle\sup_{\varepsilon\in(t_{k-1},t_{k+1})}|g^{\prime\prime\prime}(\varepsilon)|,~\text{for}~2\leq k\leq i\leq N_t-1,
	\end{array}
	\right.
\end{equation}
for any function $g\in C[0, T] \cap C^3(0, T]$. Here, $\delta_t g(t_0)=\frac{[g(t_1)-g(t_0)]}{\tau_1}$ and $I_{2,1}g(\varepsilon)$ is the quadratic polynomial that interpolates to $g(\varepsilon)$ at the points $t_{0}$, $t_1$, and $t_{2}$.
\begin{lemma}\label{L:4}
	\emph{\cite{chen2019error}} Given the assumptions $\sigma=1-\frac{\alpha}{2},$ $\tau_{i+1}\lesssim t_i$ for $i\geq 2,$ and $\frac{\tau_1}{\tau_2}\leq \rho$ (with $\rho$ being a positive constant), the following result holds true for any function $g(t) \in C^3(0, T]$:
	\begin{align*}
		\left|(\mathcal R_t^{\alpha})^{i+\sigma}\right|\lesssim t_{i+\sigma}^{-\alpha}\left(Q_g^{i+\sigma}+\displaystyle\max_{1\leq k\leq i}\{	Q_g^{i,k}\} \right),~\text{for}~0\leq i\leq N_t-1.
	\end{align*}
\end{lemma}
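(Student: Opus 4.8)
The plan is to realize $(\mathcal R_t^{\alpha})^{i+\sigma}$ as the accumulated interpolation error of the L2-1$_\sigma$ quadrature measured against the singular Caputo kernel, and then to estimate it interval by interval. Writing $\Pi g$ for the piecewise interpolant on which the scheme is built --- linear on the final subinterval $[t_i,t_{i+\sigma}]$ and quadratic (through $t_{k-1},t_k,t_{k+1}$) on each history subinterval $[t_{k-1},t_k]$, $1\le k\le i$ --- one checks, by inserting the definitions (\ref{12})--(\ref{13}) of the coefficients $r^{(\alpha,\sigma)}$ and $s^{(\alpha,\sigma)}$ into (\ref{9}), that
\[
\Gamma(1-\alpha)\,(\mathcal R_t^{\alpha})^{i+\sigma}=\int_0^{t_{i+\sigma}}(t_{i+\sigma}-s)^{-\alpha}\bigl[g'(s)-(\Pi g)'(s)\bigr]\,ds.
\]
I would then split this integral into the final-interval contribution over $[t_i,t_{i+\sigma}]$ and the history contributions over $[t_{k-1},t_k]$ for $1\le k\le i$, and bound each piece according to its distance from the origin.

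On the final interval I would exploit the defining feature of the choice $\sigma=1-\frac{\alpha}{2}$. Expanding $g'(s)-(\Pi g)'(s)=g'(s)-\delta_t g$ about $t_{i+\sigma}$ and integrating against $(t_{i+\sigma}-s)^{-\alpha}$, the leading $\mathcal O(\tau_{i+1}^{2-\alpha})$ term carrying $g''(t_{i+\sigma})$ cancels precisely when $\sigma=1-\frac{\alpha}{2}$, leaving a remainder of size $\tau_{i+1}^{3-\alpha}\sup_{(t_i,t_{i+1})}|g'''|$; multiplying and dividing by $t_{i+\sigma}^{\alpha}$ recovers the term $t_{i+\sigma}^{-\alpha}Q_g^{i+\sigma}$. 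When $i=0$ the interval touches the origin, where $g'''$ is unbounded, so instead of the Taylor remainder I would retain the error in the weighted form $\varepsilon^{1-\alpha}|\delta_t g(t_0)-g'(\varepsilon)|$ and pair it with the scale-invariant kernel integral $\int_0^{t_\sigma}(t_\sigma-s)^{-\alpha}s^{\alpha-1}\,ds=B(1-\alpha,\alpha)$, which yields $t_\sigma^{-\alpha}Q_g^{\sigma}$. For a generic interior history interval $[t_{k-1},t_k]$ with $2\le k\le i$, the quadratic interpolation error obeys $|(I_{2,k}g)'(s)-g'(s)|\lesssim \tau_k(\tau_k+\tau_{k+1})\sup_{(t_{k-1},t_{k+1})}|g'''|$; combining this with the kernel integral $\int_{t_{k-1}}^{t_k}(t_{i+\sigma}-s)^{-\alpha}\,ds$ and using $\tau_{i+1}\lesssim t_i$ to convert the kernel factor into the powers of $t_k$ and $\tau_{i+1}$ appearing in $Q_g^{i,k}$ reproduces $t_{i+\sigma}^{-\alpha}Q_g^{i,k}$.

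The delicate part, and what I expect to be the main obstacle, is the first history interval $[t_0,t_1]$ (present when $i\ge 1$). There $g'''$ fails to be integrable near the origin (indeed $g'''(s)=\mathcal O(s^{\alpha-3})$ under (\ref{5})), so the interior estimate above breaks down; instead I would keep the interpolation error in the weighted form $\varepsilon^{1-\alpha}|(I_{2,1}g)'(\varepsilon)-g'(\varepsilon)|$ that defines $Q_g^{i,1}$, and pair it with a careful bound for the singular integral $\int_{t_0}^{t_1}(t_{i+\sigma}-s)^{-\alpha}s^{\alpha-1}\,ds$, the hypothesis $\tau_1/\tau_2\le\rho$ being exactly what keeps the first quadratic interpolant $I_{2,1}$ under control near $t=0$. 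Once the three regimes are bounded, summing the history contributions, replacing the sum by $\max_{1\le k\le i}\{Q_g^{i,k}\}$, and factoring out the common weight $t_{i+\sigma}^{-\alpha}$ gives the asserted estimate; since the exact constants and the $\sigma=1-\frac{\alpha}{2}$ cancellation are worked out in \cite{chen2019error}, I would invoke those computations rather than repeat them.
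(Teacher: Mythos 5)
The paper does not prove this lemma at all: it is imported verbatim from \cite{chen2019error}, so there is no in-paper argument to compare against. Your reconstruction of the argument from that reference is structurally faithful and gets the key points right: the representation of $(\mathcal R_t^{\alpha})^{i+\sigma}$ as $\frac{1}{\Gamma(1-\alpha)}\int_0^{t_{i+\sigma}}(t_{i+\sigma}-s)^{-\alpha}\bigl[g'-(\Pi g)'\bigr]\,ds$ with the correct piecewise interpolant, the exact cancellation of the $g''$ term on $[t_i,t_{i+\sigma}]$ at $\sigma=1-\frac{\alpha}{2}$ producing $\tau_{i+1}^{3-\alpha}\sup|g'''|=t_{i+\sigma}^{-\alpha}Q_g^{i+\sigma}$, the Beta-function pairing of the weighted error with the kernel for $i=0$ and for the first history interval, and the role of $\tau_1/\tau_2\le\rho$ there. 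The one place your sketch would not go through as written is the interior history intervals $2\le k\le i$: pairing the pointwise derivative error $|(I_{2,k}g)'-g'|\lesssim(\tau_k+\tau_{k+1})^2\sup|g'''|$ with $\int_{t_{k-1}}^{t_k}(t_{i+\sigma}-s)^{-\alpha}ds$ gives a factor $(t_{i+\sigma}-t_k)^{-\alpha}$, which for small $k$ is \emph{not} bounded by the $t_{i+\sigma}^{-\alpha}\,t_k^{\alpha}\tau_{i+1}^{-\alpha}$ weighting built into $Q_g^{i,k}$, and it leaves you with a sum of $i$ terms that cannot simply be "replaced by the max." The argument in \cite{chen2019error} instead integrates by parts on each history interval (the boundary terms vanish because $g-\Pi g$ vanishes at the nodes), producing the kernel $(t_{i+\sigma}-s)^{-\alpha-1}$ against the \emph{function} error $|g-I_{2,k}g|\lesssim\tau_k^2(\tau_k+\tau_{k+1})\sup|g'''|$; the resulting integrals telescope across $k$, and that telescoping is precisely what lets the whole history sum be absorbed into $t_{i+\sigma}^{-\alpha}\max_{1\le k\le i}Q_g^{i,k}$. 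Since you defer the detailed computations to the reference this is a gap in emphasis rather than a wrong turn, but the integration by parts is the essential mechanism, not an optional refinement.
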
	

\begin{lemma}\label{L:5}
	\emph{\cite{chen2019error}} Suppose $g\in C[0, T] \cap C^3(0, T]$ and satisfies the assumptions stated in (\ref{5}). Then
	\begin{align*}
&Q_g^{i+\sigma}	\lesssim N_t^{-\min\{3-\alpha,\theta\alpha\}},~\text{for}~0\leq i\leq N_t-1,\\
&Q_g^{i,k}	\lesssim N_t^{-\min\{3-\alpha,\theta\alpha\}},~\text{for}~1\leq k\leq i,~\text{when}~ i\geq 1.
	\end{align*}
\end{lemma}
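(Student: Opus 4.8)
The plan is to reduce every quantity in \eqref{3.4} to the regularity bounds \eqref{5} for $g$ together with the explicit graded-mesh law $t_i = T(i/\hat N_t)^{\theta}$, and then to track the resulting powers of $\hat N_t$ carefully. Since $cN_t \le \hat N_t \le N_t$, any bound of the form $\hat N_t^{-p}$ is equivalent to $N_t^{-p}$, so it suffices to produce exponents in $\hat N_t$. Throughout I would use the elementary mesh estimates $\tau_{i+1} \approx \hat N_t^{-\theta}(i+1)^{\theta-1}$ and $t_i \approx \hat N_t^{-\theta} i^{\theta}$ on the graded part (valid for $i\ge1$, with $t_{i+\sigma}\approx t_i$ because $\tau_{i+1}\lesssim t_i$), together with the bounded mesh ratio, which gives $t_{k-1}\approx t_k$ and $\tau_{k\pm1}\approx\tau_k$. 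Each $Q$ then splits into a \emph{singular} contribution, coming from the term $\varepsilon^{\alpha-l}$ in \eqref{5}, and a \emph{smooth} contribution, coming from the additive constant $1$; I expect the singular part to generate the exponent $\theta\alpha$ (it is largest near $t=0$) and the smooth part to generate $3-\alpha$ (it is largest where the cells are coarsest, near $t=T$ and on the quasiuniform block), yielding the stated minimum.

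For $Q_g^{i+\sigma}$ with $1\le i\le N_t-1$ I would insert $|g'''(\varepsilon)|\le\kappa_2(1+\varepsilon^{\alpha-3})$; since $\alpha-3<0$ the supremum over $(t_i,t_{i+1})$ is controlled by $1+t_i^{\alpha-3}$. Substituting the mesh power laws and collecting exponents gives a singular part $\approx \hat N_t^{-\theta\alpha}\, i^{\theta\alpha+\alpha-3}$ and a smooth part $\approx \hat N_t^{-3\theta}\, i^{\,\theta\alpha+(\theta-1)(3-\alpha)}$. Optimising over $1\le i\le\hat N_t$ --- the singular part is largest at $i=1$ (giving $\hat N_t^{-\theta\alpha}$) or, when $\theta\alpha>3-\alpha$, at $i=\hat N_t$ (giving $\hat N_t^{-(3-\alpha)}$), while the smooth part is largest at $i=\hat N_t$ (giving $\hat N_t^{-(3-\alpha)}$) --- yields $Q_g^{i+\sigma}\lesssim N_t^{-\min\{3-\alpha,\theta\alpha\}}$. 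On the quasiuniform block $t_i$ is bounded away from $0$ and $\tau_{i+1}\approx N_t^{-1}$, so the bound reduces at once to $N_t^{-(3-\alpha)}$.

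The estimate for $Q_g^{i,k}$ with $2\le k\le i$ follows the same recipe: the bounded mesh ratio gives $\sup_{(t_{k-1},t_{k+1})}|g'''|\lesssim 1+t_k^{\alpha-3}$ and $\tau_{k+1}+\tau_k\approx\tau_k$, so that $Q_g^{i,k}\lesssim t_k^{\alpha}\tau_{i+1}^{-\alpha}\tau_k^{3}(1+t_k^{\alpha-3})$. The only genuinely new feature is the factor $\tau_{i+1}^{-\alpha}$, which I would absorb using $k\le i$ and $\theta\ge1$: since $\alpha(1-\theta)\le0$ one has $i^{\alpha(1-\theta)}\le k^{\alpha(1-\theta)}$, collapsing both the singular and the smooth contribution to a single monomial in $k$ (with exponents $\theta\alpha+\alpha-3$ and $3\theta-3+\alpha$ respectively). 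Maximising over $2\le k\le\hat N_t$ reproduces exactly $\hat N_t^{-\theta\alpha}$ when $\theta\alpha\le3-\alpha$ and $\hat N_t^{-(3-\alpha)}$ otherwise, i.e. $N_t^{-\min\{3-\alpha,\theta\alpha\}}$.

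The hard part is the first cell, namely $Q_g^{i,1}$ (and, for the same reason, the $i=0$ case governed by $Q_g^{\sigma}$), because here $t_0=0$ is exactly the singular endpoint, the classical $C^3$ interpolation-error bound is unavailable, and one must instead control the derivative of the quadratic interpolant $I_{2,1}g$ against the weight $\varepsilon^{1-\alpha}$. My plan is to write the interpolation error in Peano-kernel (equivalently divided-difference) form, differentiate it in $\varepsilon$, and then split the representation: on the portion near $0$ I would use $|g'(\varepsilon)|\lesssim\varepsilon^{\alpha-1}$, whose blow-up is precisely cancelled by the weight $\varepsilon^{1-\alpha}$, while on the portion bounded away from $0$ I would use $|g'''(\varepsilon)|\lesssim\varepsilon^{\alpha-3}$. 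Balancing these two pieces against $\tau_1^{\alpha}\approx\hat N_t^{-\theta\alpha}$ should deliver $Q_g^{i,1}\lesssim\hat N_t^{-\theta\alpha}$, which is subsumed by $N_t^{-\min\{3-\alpha,\theta\alpha\}}$. The delicate book-keeping of the weight against the simultaneous singularities of $g'$ and $g'''$ at the endpoint is where the estimate is most easily mishandled, so that is where I would concentrate the rigour.
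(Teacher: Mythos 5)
The paper itself offers no proof of this lemma --- it is imported verbatim from \cite{chen2019error} --- so there is nothing internal to compare against; judged on its own terms, your argument is the standard one from that reference and your power-counting is correct: substituting $t_i\approx T(i/\hat N_t)^{\theta}$, $\tau_{i+1}\approx \hat N_t^{-\theta}i^{\theta-1}$ and $|g'''(\varepsilon)|\lesssim 1+\varepsilon^{\alpha-3}$ does give the monomials $\hat N_t^{-\theta\alpha}i^{\theta\alpha+\alpha-3}$ and $\hat N_t^{-3\theta}i^{3\theta-3+\alpha}$, whose maxima over the index range are exactly $\hat N_t^{-\min\{3-\alpha,\theta\alpha\}}$, and the factor $\tau_{i+1}^{-\alpha}\le c\,\tau_k^{-\alpha}$ is correctly absorbed via $\theta\ge1$. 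The one place you stop at a sketch --- the first-cell quantities $Q_g^{\sigma}$ and $Q_g^{i,1}$ --- is actually easier than your Peano-kernel plan suggests: no cancellation between the interpolant and $g'$ is needed, since $\varepsilon^{1-\alpha}|g'(\varepsilon)|\lesssim 1$ directly from (\ref{5}), and writing $(I_{2,1}g)'(\varepsilon)=g[t_0,t_1]+(2\varepsilon-t_0-t_1)\,g[t_0,t_1,t_2]$ with $|g[t_0,t_1]|\lesssim\tau_1^{\alpha-1}$ (integrate $|g'|\lesssim 1+s^{\alpha-1}$ over $(0,t_1)$) and $|g[t_0,t_1,t_2]|\lesssim\tau_1^{\alpha-2}$ (bounded mesh ratio $\tau_1/\tau_2\le\rho$) gives $\varepsilon^{1-\alpha}|(I_{2,1}g)'(\varepsilon)|\lesssim 1$ on $(0,t_1)$ by the triangle inequality, whence $Q_g^{\sigma},\,Q_g^{i,1}\lesssim\tau_1^{\alpha}\lesssim N_t^{-\theta\alpha}$. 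With that detail filled in, the proof is complete.
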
	

Thus, combining Lemmas \ref{L:4} and \ref{L:5}, we get
\begin{align}\label{3.5}
	\left|(\mathcal R_t^{\alpha})^{i+\sigma}\right|\lesssim t_{i+\sigma}^{-\alpha} N_t^{-\min\{3-\alpha,\theta\alpha\}}, ~\text{for}~0\leq i\leq N_t-1.
\end{align}
\begin{Theorem}\label{T:3.1}
Suppose that the solution $v(x, y, t)$ of problem (\ref{7})-(\ref{8}) meets the assumptions provided in (\ref{4}) and (\ref{5}). Then, the local truncation error $(\mathcal R_f)_{m,n}^{i+1}$, given in equation (\ref{28}), of the scheme (\ref{29}) satisfies the following bound
\begin{align*}
	|(\mathcal R_f)_{m,n}^{i+1}|\leq c \left(N_t^{-\min\{3-\alpha,\theta\alpha,1+2\alpha,2+\alpha\}}+h_x^4+h_y^4\right).
\end{align*}
	\end{Theorem}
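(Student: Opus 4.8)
The plan is to bound separately the four constituents of $(\mathcal R_f)_{m,n}^{i+1}$ displayed in (\ref{28}), namely $(\mathcal R_t^\alpha)^{i+\sigma}/\mu$, $(\mathcal R_l)^{i+\sigma}_{m,n}/\mu$, $\mathcal R_t^{i+1}/\mu$ and $(\mathcal R_p)^{i+1}_{m,n}$, and then add the four estimates. The single most useful preliminary fact is a uniform control of the reciprocal weight: since $\beta\ge 0$ forces $\mu=\beta\sigma+w^{(\alpha,\sigma)}_{i,i}\ge w^{(\alpha,\sigma)}_{i,i}>0$, Lemma \ref{L:3} gives $1/\mu\le 1/w^{(\alpha,\sigma)}_{i,i}\le c\,\tau_{i+1}^{\alpha}$; in particular $1/\mu$ and $1/\mu^2$ are themselves bounded by constants because $\tau_{i+1}$ is bounded by the length of $[0,T_f]$.

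First I would dispose of the two ``already prepared'' terms. For the temporal consistency error I combine (\ref{3.5}) with $1/\mu\le c\tau_{i+1}^\alpha$ and the elementary inequality $t_{i+\sigma}\ge\sigma\tau_{i+1}$, which yields $\tau_{i+1}^\alpha t_{i+\sigma}^{-\alpha}\le\sigma^{-\alpha}$; hence $|(\mathcal R_t^\alpha)^{i+\sigma}|/\mu\lesssim N_t^{-\min\{3-\alpha,\theta\alpha\}}$. For the spatial error I simply multiply (\ref{23}) by the bounded factor $1/\mu$ to get $|(\mathcal R_l)^{i+\sigma}_{m,n}|/\mu\le c(h_x^4+h_y^4)$. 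These two contributions already produce the $3-\alpha$, $\theta\alpha$ and $h^4$ parts of the claimed estimate.

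The heart of the proof lies in the two genuinely new terms $\mathcal R_t^{i+1}/\mu$ and $(\mathcal R_p)^{i+1}_{m,n}$, which are responsible for the exponents $2+\alpha$ and $1+2\alpha$. For $\mathcal R_t^{i+1}$, which measures the error of the replacement $v^{i+\sigma}\approx\sigma v^{i+1}+(1-\sigma)v^i$ in the reaction and compact-diffusion terms, I invoke Lemma \ref{L:2} together with the regularity (\ref{5}) to get, for $i\ge1$, $|\mathcal R_t^{i+1}|\le c\,\tau_{i+1}^2(1+t_i^{\alpha-2})$, whence $|\mathcal R_t^{i+1}|/\mu\le c\,\tau_{i+1}^{2+\alpha}(1+t_i^{\alpha-2})$. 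For $(\mathcal R_p)^{i+1}_{m,n}$ I use its definition (\ref{26}), the bound $1/\mu^2\le c\tau_{i+1}^{2\alpha}$ and $|v^{i+1}-v^i|\le\int_{t_i}^{t_{i+1}}|\partial_t v|\,ds$ with (\ref{5}), giving for $i\ge1$, $|(\mathcal R_p)^{i+1}_{m,n}|\le c\,\tau_{i+1}^{1+2\alpha}(1+t_i^{\alpha-1})$. The decisive step is then a quantitative analysis of $\max_i\tau_{i+1}^{2+\alpha}(1+t_i^{\alpha-2})$ and $\max_i\tau_{i+1}^{1+2\alpha}(1+t_i^{\alpha-1})$ on the fitted mesh: inserting $t_i=T(i/\hat N_t)^\theta$ and $\tau_{i+1}\simeq\theta T\hat N_t^{-\theta}i^{\theta-1}$ reduces the singular parts to $c\,\hat N_t^{-2\theta\alpha}i^{\,2\theta\alpha-2-\alpha}$ and $c\,\hat N_t^{-3\theta\alpha}i^{\,3\theta\alpha-1-2\alpha}$, whose monotonicity in $i$ is fixed by the sign of the exponents; maximizing over $1\le i\le\hat N_t$ (and using $\hat N_t\ge cN_t$) gives the worst cases $N_t^{-(2+\alpha)}$ and $N_t^{-(1+2\alpha)}$ for large $\theta$, while for small $\theta$ the maxima sit at $i=1$ and are dominated by the $\theta\alpha$ term. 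On the quasiuniform block $[T,T_f]$, where $t_i\ge T>0$ and $\tau_{i+1}\simeq N_t^{-1}$, both expressions are trivially $\lesssim N_t^{-(2+\alpha)}$ and $N_t^{-(1+2\alpha)}$.

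I expect the main obstacle to be precisely this mesh-dependent maximization, and in particular the exceptional index $i=0$, where $\partial_t^2 v$ fails to be integrable on $(0,t_1)$ so that Lemma \ref{L:2} is inapplicable. For $i=0$ I would instead estimate directly from $|\partial_t v|\le\kappa_2(1+s^{\alpha-1})$: writing $v^\sigma-\{\sigma v^1+(1-\sigma)v^0\}=(1-\sigma)\int_0^{t_\sigma}v'\,ds-\sigma\int_{t_\sigma}^{t_1}v'\,ds$ and $v^1-v^0=\int_0^{t_1}v'\,ds$ gives $|\mathcal R_t^{1}|\le c\,t_1^{\alpha}$ and $|v^1-v^0|\le c\,t_1^{\alpha}$, so that after multiplying by $1/\mu\le c\tau_1^\alpha=ct_1^\alpha$ and $1/\mu^2\le ct_1^{2\alpha}$ the $i=0$ contributions are $O(t_1^{2\alpha})=O(N_t^{-2\theta\alpha})$ and $O(t_1^{3\alpha})=O(N_t^{-3\theta\alpha})$, both safely dominated by $N_t^{-\theta\alpha}$. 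Collecting the graded-mesh, quasiuniform and $i=0$ estimates for all four pieces and taking the overall minimum of the exponents $3-\alpha$, $\theta\alpha$, $1+2\alpha$, $2+\alpha$ together with the $h_x^4+h_y^4$ spatial part then delivers the asserted bound. A secondary point I would flag is that the second time derivative of the compact-diffusion term also enters $\mathcal R_t^{i+1}$, so the argument tacitly uses the mixed-derivative analogue of (\ref{5}); I would either state this explicitly or absorb it into the smoothness hypotheses (\ref{4})--(\ref{5}).
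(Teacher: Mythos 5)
Your proposal is correct and follows exactly the paper's four-term decomposition of $(\mathcal R_f)_{m,n}^{i+1}$ from (\ref{28}), with identical treatment of the first two pieces: $1/\mu\le 1/w^{(\alpha,\sigma)}_{i,i}\le c\,\tau_{i+1}^{\alpha}$ from Lemma \ref{L:3}, the cancellation $\tau_{i+1}^{\alpha}t_{i+\sigma}^{-\alpha}\le\sigma^{-\alpha}$ against (\ref{3.5}), and the direct use of (\ref{23}) for the spatial part. Where you genuinely diverge is in the last two terms. The paper disposes of $\mathcal R_t^{i+1}/\mu$ by citing Lemma \ref{L:2} and the global bound $\tau_{i+1}\le C T_f/N_t$ from (\ref{3.10}), and of $(\mathcal R_p)^{i+1}_{m,n}$ by combining (\ref{26}) with $1/\mu^2\le c\tau_{i+1}^{2\alpha}$ and the same step-size bound; in both cases it implicitly treats $\max_i|v''(t_i)|$ and $|v^{i+1}-v^i|/\tau_{i+1}$ as $O(1)$, which sits uneasily with the assumed blow-up $|\partial_t^l v|\le\kappa_2(1+t^{\alpha-l})$ in (\ref{5}) near $t=0$. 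You instead carry the singular weights $t_i^{\alpha-2}$ and $t_i^{\alpha-1}$ through, perform the explicit maximization of $\tau_{i+1}^{2+\alpha}(1+t_i^{\alpha-2})$ and $\tau_{i+1}^{1+2\alpha}(1+t_i^{\alpha-1})$ over the graded and quasiuniform blocks, and treat $i=0$ separately (where Lemma \ref{L:2} is indeed inapplicable because $v''$ is not bounded on $(0,t_1)$), recovering the exponents $2+\alpha$ and $1+2\alpha$ only away from the origin and showing the $i=0$ contribution is dominated by $N_t^{-\theta\alpha}$. This is a strictly more rigorous route that closes a gap the paper leaves open, at the cost of the mesh-dependent case analysis; your closing remark that the second time derivative of the compact-diffusion term also enters $\mathcal R_t^{i+1}$, so that a mixed-derivative analogue of (\ref{5}) is tacitly needed, is likewise a legitimate point the paper does not address.
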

\begin{proof}
	Based on equation (\ref{28}), and $w^{(\alpha,\sigma)}_{i,i}>0$, we deduce that
\begin{align}\label{3.6}
\nonumber	|(\mathcal R_f)_{m,n}^{i+1}|\leq&~\left|\frac{(\mathcal R_t^{\alpha})^{i+\sigma}}{\mu}\right|+\left|\frac{(\mathcal R_l)^{i+\sigma}_{m,n}}{\mu}\right|+\left|\frac{\mathcal R_t^{i+1}}{\mu}\right|+\left|(\mathcal R_p)_{m,n}^{i+1}\right|\\
\leq	&~\frac{|(\mathcal R_t^{\alpha})^{i+\sigma}|}{w^{(\alpha,\sigma)}_{i,i}}+\frac{|(\mathcal R_l)^{i+\sigma}_{m,n}|}{w^{(\alpha,\sigma)}_{i,i}}+\frac{|\mathcal R_t^{i+1}|}{w^{(\alpha,\sigma)}_{i,i}}+\left|(\mathcal R_p)_{m,n}^{i+1}\right|.
\end{align}
Next, we proceed to bound each term individually in inequality (\ref{3.6}).

Utilizing Lemma \ref{L:3} together with inequality (\ref{3.5}), we arrive at 
\begin{align}\label{3.7}
\nonumber	\frac{|(\mathcal R_t^{\alpha})^{i+\sigma}|}{w^{(\alpha,\sigma)}_{i,i}}\leq& ~c~\tau_{i+1}^{\alpha} t_{i+\sigma}^{-\alpha} N_t^{-\min\{3-\alpha,\theta\alpha\}}\\
\nonumber	=&~c~\tau_{i+1}^{\alpha} (t_i+\sigma\tau_{i+1})^{-\alpha} N_t^{-\min\{3-\alpha,\theta\alpha\}}\\
\nonumber	\leq&~c~\tau_{i+1}^{\alpha} (\sigma\tau_{i+1})^{-\alpha} N_t^{-\min\{3-\alpha,\theta\alpha\}}\\
	\leq&~c~ N_t^{-\min\{3-\alpha,\theta\alpha\}}.
\end{align}	
Next, we invoke Lemma \ref{L:3} along with relation (\ref{23}) to obtain
\begin{align}\label{3.8}
\frac{|(\mathcal R_l)^{i+\sigma}_{m,n}|}{w^{(\alpha,\sigma)}_{i,i}}\leq c~ (h_x^4+h_y^4).
\end{align}	
Moving forward, by merging Lemmas \ref{L:2} and \ref{L:3}, we get
\begin{align}\label{3.9}
	\frac{|\mathcal R_t^{i+1}|}{w^{(\alpha,\sigma)}_{i,i}}\leq~c~\tau_{i+1}^{2+\alpha}.
\end{align}
 Moreover, it is evident from \cite{stynes2017error} that
\begin{align}\label{3.10}
	\tau_{i+1}\leq C~T_fN_t^{-\theta}i^{\theta-1}\leq C~T_fN_t^{-\theta}N_t^{\theta-1}\leq \frac{C~T_f}{N_t}.
\end{align}
Hence, by employing equations (\ref{3.9}) and (\ref{3.10}), we deduce that
\begin{align}\label{3.11}
	\frac{|\mathcal R_t^{i+1}|}{w^{(\alpha,\sigma)}_{i,i}}\leq~c~N_t^{-(2+\alpha)}.
\end{align}
Finally, by combining Lemma \ref{L:3} with equation (\ref{26}) and incorporating relation (\ref{3.10}), we obtain
\begin{align}\label{3.12}
|(\mathcal R_p)_{m,n}^{i+1}|\leq~c~N_t^{-(1+2\alpha)}.
\end{align}
Therefore, by substituting equations (\ref{3.7}), (\ref{3.8}), (\ref{3.11}), and (\ref{3.12}) into relation (\ref{3.6}), we have the theorem.
\end{proof}

\subsection{\bf{Stability analysis}}

\noindent Let the scheme (\ref{29}) having a perturbed solution $\{\widetilde V_{m,n}^{i}, 1\leq m\leq M_x-1,~1\leq n\leq M_y-1,~1\leq i\leq N_t\}$. Then, we will examine how the
perturbation $\zeta_{m,n}^{i}= V_{m,n}^{i}-\widetilde V_{m,n}^{i}$, $1\leq m\leq M_x-1,~1\leq n\leq M_y-1,~1\leq i\leq N_t$, evolves over time. Note that $\zeta_{m,n}^{i}$ satisfies the following error equation
\begin{align}\label{3.13}
	\nonumber&\left(\mathcal H_x-\frac{\sigma\lambda_1}{\mu}\delta_{x}^{2}\right)\left(\mathcal H_y-\frac{\sigma\lambda_2}{\mu}\delta_{y}^{2}\right)\zeta^{i+1}_{m,n}=\frac{1}{\mu}\sum_{k=1}^{i}\left(w^{(\alpha,\sigma)}_{i,k}-w^{(\alpha,\sigma)}_{i,k-1}\right)\mathcal H_x\mathcal H_y\zeta^{k}_{m,n}\\\nonumber&+\frac{w^{(\alpha,\sigma)}_{i,0}}{\mu}\mathcal H_x\mathcal H_y\zeta^{0}_{m,n}-\frac{\beta(1-\sigma)}{\mu}  \mathcal H_x\mathcal H_y\zeta^{i}_{m,n}+\frac{(1-\sigma)}{\mu}\left(\lambda_1 \mathcal H_y	\delta_{{x}}^{2}+\lambda_2  \mathcal H_x	\delta_{y}^{2}\right)\zeta^{i}_{m,n}\\&+\frac{\lambda_1\lambda_2\sigma^2}{\mu^2}\delta_{x}^{2}\delta_{y}^{2}\zeta^{i}_{m,n},~~1\leq m\leq M_x-1,~1\leq n\leq M_y-1,~0\leq i\leq N_t-1,
\end{align}
where 

$\zeta^{i+1}= [\zeta^{i+1}_{1,1},\zeta^{i+1}_{1,2},\dots,\zeta^{i+1}_{1,M_y-1},\zeta^{i+1}_{2,1},\zeta^{i+1}_{2,2},\dots,\zeta^{i+1}_{2,M_y-1},\dots,\zeta^{i+1}_{M_x-1,1},\zeta^{i+1}_{M_x-1,2},\dots,\zeta^{i+1}_{M_x-1,M_y-1}]^T.$

 The grid function $\zeta^{i+1}(x,y),~0\leq i\leq N_t-1$, is defined as 
\begin{align}\label{3.14}
	\zeta^{i+1}(x,y)=
	\begin{cases}
		\zeta^{i+1}_{m,n}, & x_{m-\frac{1}{2}}<x\leq x_{m+\frac{1}{2}}~, y_{n-\frac{1}{2}}<y\leq y_{n+\frac{1}{2}},\\
		& 1\leq m\leq M_x-1~,1\leq n\leq M_y-1, \\
		0, &0\leq x\leq \frac{h_x}{2}~,~L-\frac{h_x}{2}<x\leq L,\\
		& 0\leq y\leq \frac{h_y}{2}~,~L-\frac{h_y}{2}<y\leq L.
	\end{cases}
\end{align}

The expression of $\zeta^{i+1}(x,y)$ as a Fourier series is as follows
\begin{align}\label{3.15}
	\zeta^{i+1}(x,y)=\mathop{\sum^{\infty}\sum^{\infty}}_{k_1=-\infty\ k_2=-\infty}\omega^{i+1}(k_1,k_2)e^{2\pi\iota\left(\frac{k_1x+k_2y}{L}\right)},~0\leq i\leq N_t-1,
\end{align}
where \begin{align}\label{3.16}
	\omega^{i+1}(k_1,k_2)=\frac{1}{L^2}\int_0^L\int_0^L\zeta^{i+1}(x,y)e^{-2\pi\iota\left(\frac{k_1x+k_2y}{L}\right)}dxdy.
\end{align}
By virtue of the $L_2$-discrete norm definition and Parseval's equality, we have
\begin{align}\label{3.17}
	\|\zeta^{i+1}\|^2_2=\mathop{\sum^{M_x-1}\sum^{M_y-1}}_{m=1\ n=1} h_xh_y|\zeta^{i+1}_{m,n}|^2=L^2\mathop{\sum^{\infty}\sum^{\infty}}_{k_1=-\infty\ k_2=-\infty}|\omega^{i+1}(k_1,k_2)|^2.
\end{align}
Let us consider the form of the solution of equation (\ref{3.13}) to be 
\begin{align}\label{3.18}
	\zeta^{i+1}_{m,n}=\omega^{i+1} e^{\iota( \varsigma_1m h_x+ \varsigma_2n h_y)},
\end{align}
where $\iota=\sqrt{-1},~\varsigma_1=\frac{2\pi k_1}{L},~\varsigma_2=\frac{2\pi k_2}{L}$.

It is readily apparent that
\begin{equation}\label{3.19}
	\left\{
	\begin{array}{lll}
\vspace{0.2cm}\mathcal H_x\zeta^{i+1}_{m,n}=\frac{1}{3}\left[\cos^2\left(\frac{\varsigma_1h_x}{2}\right)+2\right]\omega^{i+1} e^{\iota( \varsigma_1m h_x+ \varsigma_2n h_y)},\\\vspace{0.2cm}
\mathcal H_y\zeta^{i+1}_{m,n}=	\frac{1}{3}\left[\cos^2\left(\frac{\varsigma_2h_y}{2}\right)+2\right]\omega^{i+1} e^{\iota( \varsigma_1m h_x+ \varsigma_2n h_y)},\\\vspace{0.2cm}
\delta^2_x\zeta^{i+1}_{m,n}=-\frac{4}{h_x^2}\sin^2\left(\frac{\varsigma_1h_x}{2}\right)\omega^{i+1} e^{\iota( \varsigma_1m h_x+ \varsigma_2n h_y)},\\
\delta^2_y\zeta^{i+1}_{m,n}=-\frac{4}{h_y^2}\sin^2\left(\frac{\varsigma_2h_y}{2}\right)\omega^{i+1} e^{\iota( \varsigma_1m h_x+ \varsigma_2n h_y)}.
	\end{array}
\right.
\end{equation}
By inserting equation (\ref{3.18}) into equation (\ref{3.13}) and using equations given in (\ref{3.19}), we ascertain that
\begin{align}\label{3.20}
\nonumber &
\left(a_1+\frac{\sigma\lambda_1}{\mu}b_1\right)\left(a_2+\frac{\sigma\lambda_2}{\mu}b_2\right)\omega^{i+1}=\frac{a_1a_2}{\mu}\left[\sum_{k=1}^{i}\left(w^{(\alpha,\sigma)}_{i,k}-w^{(\alpha,\sigma)}_{i,k-1}\right)\omega^{k}+w^{(\alpha,\sigma)}_{i,0}\omega^{0}\right]\\&~~ -\frac{(1-\sigma)}{\mu}\left(\lambda_1a_2b_1+\lambda_2a_1b_2+\beta a_1a_2\right)\omega^{i}+\frac{\sigma^2}{\mu^2}\lambda_1\lambda_2b_1b_2\omega^{i},~0\leq i\leq N_t-1,
\end{align}
where 
\begin{align}\label{3.21}
	\left\{
	\begin{array}{lll}
	\vspace{0.2cm}	a_1=\frac{1}{3}\left[\cos^2\left(\frac{\varsigma_1h_x}{2}\right)+2\right],~
		b_1=\frac{4}{h_x^2}\sin^2\left(\frac{\varsigma_1h_x}{2}\right),\\\vspace{0.2cm}	a_2=\frac{1}{3}\left[\cos^2\left(\frac{\varsigma_2h_y}{2}\right)+2\right],~
		b_2=\frac{4}{h_y^2}\sin^2\left(\frac{\varsigma_2h_y}{2}\right).
	\end{array}
	\right.
\end{align}
Here, it is easy to understand that $a_1,a_2\geq \frac{2}{3}$, and $b_1,b_2\geq 0$.

Rewriting equation (\ref{3.20}), we have
\begin{align}\label{3.22}
	\nonumber	\omega^{i+1}=&\frac{1}{\left(a_1+\frac{\sigma\lambda_1}{\mu}b_1\right)\left(a_2+\frac{\sigma\lambda_2}{\mu}b_2\right)}\left[\frac{a_1a_2}{\mu}\left(\sum_{k=1}^{i}\left(w^{(\alpha,\sigma)}_{i,k}-w^{(\alpha,\sigma)}_{i,k-1}\right)\omega^{k}+w^{(\alpha,\sigma)}_{i,0}\omega^{0}\right)-\frac{(1-\sigma)}{\mu}\right.\\&~~~\times\left(\lambda_1a_2b_1+\lambda_2a_1b_2+\beta a_1a_2\right)\omega^{i}+\frac{\sigma^2}{\mu^2}\lambda_1\lambda_2b_1b_2\omega^{i}\Bigg],~0\leq i\leq N_t-1.
\end{align}

\begin{lemma}\label{L:6}
Consider $\omega^{i+1}$ to be the solution of equation (\ref{3.22}). Then, \begin{align*}|\omega^{i+1}|\leq |\omega^0|,~ 0\leq i\leq N_t-1.\end{align*}
\end{lemma}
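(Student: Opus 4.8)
The plan is to argue by induction on $i$, proving $|\omega^{i+1}|\le|\omega^0|$ under the hypothesis that $|\omega^{k}|\le|\omega^0|$ for all $0\le k\le i$. The base case $i=0$ is in fact the special instance of the inductive step in which the history sum over $k$ is empty, so I would treat both uniformly rather than separately. Before estimating, I would record the structural facts that drive everything: from (\ref{3.21}) one has $a_1,a_2\ge\frac23$ and $b_1,b_2\ge0$; the denominator is positive because $\mu=\beta\sigma+w^{(\alpha,\sigma)}_{i,i}>0$ and $\beta\ge0$; and, crucially, $\sigma=1-\alpha/2\ge\frac12$ since $\alpha\in(0,1)$.

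Next I would take absolute values on both sides of (\ref{3.22}). Writing the positive denominator as $D=\left(a_1+\frac{\sigma\lambda_1}{\mu}b_1\right)\left(a_2+\frac{\sigma\lambda_2}{\mu}b_2\right)$, the first key step is to control the history sum. By Lemma \ref{L:1}(iv) the increments $w^{(\alpha,\sigma)}_{i,k}-w^{(\alpha,\sigma)}_{i,k-1}$ are positive, so applying the triangle inequality together with the inductive hypothesis lets me replace every $|\omega^{k}|$ by $|\omega^0|$ and telescope, using also $w^{(\alpha,\sigma)}_{i,0}>0$ from Lemma \ref{L:1}(i):
$$\sum_{k=1}^{i}\bigl(w^{(\alpha,\sigma)}_{i,k}-w^{(\alpha,\sigma)}_{i,k-1}\bigr)|\omega^{k}|+w^{(\alpha,\sigma)}_{i,0}|\omega^{0}|\le w^{(\alpha,\sigma)}_{i,i}\,|\omega^0|.$$
This reduces the estimate to $D\,|\omega^{i+1}|\le\bigl(\frac{a_1a_2}{\mu}w^{(\alpha,\sigma)}_{i,i}+|Q-P|\bigr)|\omega^0|$, where $P=\frac{1-\sigma}{\mu}\bigl(\lambda_1a_2b_1+\lambda_2a_1b_2+\beta a_1a_2\bigr)$ and $Q=\frac{\sigma^2}{\mu^2}\lambda_1\lambda_2b_1b_2$ are the two explicit $\omega^{i}$ contributions in (\ref{3.22}), which enter with opposite signs.

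It then suffices to prove $\frac{a_1a_2}{\mu}w^{(\alpha,\sigma)}_{i,i}+|Q-P|\le D$. Here I would use $\mu=\beta\sigma+w^{(\alpha,\sigma)}_{i,i}$ to rewrite $\frac{a_1a_2}{\mu}w^{(\alpha,\sigma)}_{i,i}=a_1a_2-\frac{\beta\sigma a_1a_2}{\mu}$ and expand $D$, so that after cancellation the required inequality collapses to $|Q-P|\le\frac{\sigma}{\mu}S+Q$, where $S=\lambda_1a_2b_1+\lambda_2a_1b_2+\beta a_1a_2\ge0$ and $P=\frac{1-\sigma}{\mu}S$. This is the crux and the main obstacle, since $Q-P$ is not a priori signed. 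I would dispose of it by a two-case argument: if $Q\ge P$ then $Q-P\le Q\le\frac{\sigma}{\mu}S+Q$ trivially; if $Q<P$ the inequality is equivalent to $\frac{1-2\sigma}{\mu}S\le2Q$, whose left-hand side is $\le0$ precisely because $\sigma\ge\frac12$, while the right-hand side is $\ge0$. Both cases close the induction. Thus the whole argument hinges on the sign and telescoping properties of the $w^{(\alpha,\sigma)}$ weights furnished by Lemma \ref{L:1} and on the choice $\sigma=1-\alpha/2$, whereas the spatial symbols $a_j,b_j$ enter only through their nonnegativity.
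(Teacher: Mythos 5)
Your proof is correct and follows essentially the same route as the paper's: induction on $i$, telescoping the positive weight increments (via Lemma \ref{L:1}) to obtain the coefficient $w^{(\alpha,\sigma)}_{i,i}$ in front of $|\omega^0|$, and then checking that the resulting numerator is dominated by the product denominator using $\mu=\beta\sigma+w^{(\alpha,\sigma)}_{i,i}$ and $\sigma\ge 1-\sigma$. The only (cosmetic) difference is that the paper simply bounds the $\omega^{i}$-coefficient by the triangle inequality $|Q-P|\le P+Q$ together with $P=\frac{1-\sigma}{\mu}S\le\frac{\sigma}{\mu}S$, which renders your two-case analysis unnecessary, though not incorrect.
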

\begin{proof}
We shall establish the validity of this statement by employing the principle of mathematical induction. By substituting $i=0$ into equation (\ref{3.22}), we obtain
\begin{align}\label{3.23}
	\omega^{1}=&\frac{\left[\frac{a_1a_2}{\mu}w^{(\alpha,\sigma)}_{0,0}-\frac{(1-\sigma)}{\mu}\left(\lambda_1a_2b_1+\lambda_2a_1b_2+\beta a_1a_2\right)+\frac{\sigma^2}{\mu^2}\lambda_1\lambda_2b_1b_2\right]\omega^{0}}{\left(a_1+\frac{\sigma\lambda_1}{\mu}b_1\right)\left(a_2+\frac{\sigma\lambda_2}{\mu}b_2\right)}.
\end{align}
Since $\sigma\geq(1-\sigma)\geq0$ and $w^{(\alpha,\sigma)}_{0,0}>0$, therefore
	\begin{align}\label{3.24}
		|\omega^{1}|\leq&\frac{\left[\frac{a_1a_2}{\mu}{(\beta\sigma+w^{(\alpha,\sigma)}_{0,0})}+\frac{\sigma}{\mu}\left(\lambda_1a_2b_1+\lambda_2a_1b_2\right)+\frac{\sigma^2}{\mu^2}\lambda_1\lambda_2b_1b_2\right]|\omega^{0}|}{\left(a_1+\frac{\sigma\lambda_1}{\mu}b_1\right)\left(a_2+\frac{\sigma\lambda_2}{\mu}b_2\right)}\leq|\omega^{0}|.
	\end{align}
Now, let us assume that
 	\begin{align}\label{3.25}
	|\omega^l|\leq |\omega^0|, ~\text{for all}~ 1\leq l\leq i.
	\end{align}
	Next, for $l=i+1$, from equation (\ref{3.22}) with Lemma \ref{L:1} and assumptions (\ref{3.25}), we get
	\begin{align*}
		\nonumber	|\omega^{i+1}|\leq&\frac{\left[\frac{a_1a_2}{\mu}\left(\displaystyle\sum_{k=1}^{i}\left(w^{(\alpha,\sigma)}_{i,k}-w^{(\alpha,\sigma)}_{i,k-1}\right)+w^{(\alpha,\sigma)}_{i,0}\right)+\frac{(1-\sigma)}{\mu}\left(\lambda_1a_2b_1+\lambda_2a_1b_2+\beta a_1a_2\right)+\frac{\sigma^2}{\mu^2}\lambda_1\lambda_2b_1b_2\right]}{\left(a_1+\frac{\sigma\lambda_1}{\mu}b_1\right)\left(a_2+\frac{\sigma\lambda_2}{\mu}b_2\right)}|\omega^0|.
	\end{align*}
	Since $\sigma\geq(1-\sigma)\geq0$ and $w^{(\alpha,\sigma)}_{i,i}>0$, therefore
	\begin{align*}
		\nonumber	|\omega^{i+1}|\leq&\frac{\left[\frac{a_1a_2}{\mu}\left(\beta\sigma+w^{(\alpha,\sigma)}_{i,i}\right)+\frac{\sigma}{\mu}\left(\lambda_1a_2b_1+\lambda_2a_1b_2\right)+\frac{\sigma^2}{\mu^2}\lambda_1\lambda_2b_1b_2\right]}{\left(a_1+\frac{\sigma\lambda_1}{\mu}b_1\right)\left(a_2+\frac{\sigma\lambda_2}{\mu}b_2\right)}|\omega^0|=|\omega^0|.
	\end{align*}
Thus, we have $|\omega^{i+1}|\leq|\omega^0|,$ for all $0\leq i\leq N_t-1$.
\end{proof}
\begin{Theorem}
The high-order compact ADI scheme given by (\ref{29}) for solving the model problem (\ref{7})-(\ref{8}) is unconditionally stable.
	\end{Theorem}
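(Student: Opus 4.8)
The plan is to complete the von Neumann analysis already set up in \eqref{3.13}--\eqref{3.22} by converting the modewise amplitude estimate of Lemma \ref{L:6} into a bound on the full discrete $L_2$ norm of the perturbation through Parseval's equality \eqref{3.17}. The essential point, which is precisely what yields \emph{unconditional} stability, is that the amplification bound $|\omega^{i+1}|\leq|\omega^0|$ in Lemma \ref{L:6} was derived from the recurrence \eqref{3.22} without imposing any constraint relating the spatial step sizes $h_x,h_y$ to the temporal mesh; hence no step-size restriction can enter the final norm estimate.

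First I would fix a level $i$ with $0\leq i\leq N_t-1$ and recall that each Fourier coefficient $\omega^{i+1}(k_1,k_2)$ of $\zeta^{i+1}$ obeys \eqref{3.22}, so that Lemma \ref{L:6} gives $|\omega^{i+1}(k_1,k_2)|\leq|\omega^0(k_1,k_2)|$ for every pair $(k_1,k_2)$. Squaring this inequality, summing over all integer pairs, and multiplying by $L^2$ yields
\begin{align*}
L^2\mathop{\sum^{\infty}\sum^{\infty}}_{k_1=-\infty\ k_2=-\infty}|\omega^{i+1}(k_1,k_2)|^2\leq L^2\mathop{\sum^{\infty}\sum^{\infty}}_{k_1=-\infty\ k_2=-\infty}|\omega^{0}(k_1,k_2)|^2.
\end{align*}
By \eqref{3.17} the two sides are exactly $\|\zeta^{i+1}\|_2^2$ and $\|\zeta^0\|_2^2$, whence $\|\zeta^{i+1}\|_2\leq\|\zeta^0\|_2$ for all $0\leq i\leq N_t-1$. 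Since this bounds the perturbation at every time level by its initial value uniformly in the discretization parameters, the scheme is unconditionally stable.

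The substantive work underlying this theorem has in fact already been discharged in Lemma \ref{L:6}, whose inductive proof rests on the positivity of the Alikhanov weights from Lemma \ref{L:1}, the telescoping identity $\sum_{k=1}^{i}\bigl(w^{(\alpha,\sigma)}_{i,k}-w^{(\alpha,\sigma)}_{i,k-1}\bigr)+w^{(\alpha,\sigma)}_{i,0}=w^{(\alpha,\sigma)}_{i,i}$, the elementary facts $a_1,a_2\geq\frac{2}{3}$ and $b_1,b_2\geq0$ from \eqref{3.21}, and the positivity $\mu=\beta\sigma+w^{(\alpha,\sigma)}_{i,i}>0$; these combine so that the numerator in the bound for $|\omega^{i+1}|$ collapses exactly onto the denominator $\bigl(a_1+\frac{\sigma\lambda_1}{\mu}b_1\bigr)\bigl(a_2+\frac{\sigma\lambda_2}{\mu}b_2\bigr)$. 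Consequently the only genuine obstacle remaining at this stage is the routine termwise passage in the doubly infinite Parseval sum, which is legitimate because every partial sum is dominated by the convergent series $\mathop{\sum^{\infty}\sum^{\infty}}_{k_1=-\infty\ k_2=-\infty}|\omega^0(k_1,k_2)|^2=\|\zeta^0\|_2^2/L^2$.
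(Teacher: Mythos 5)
Your proposal is correct and follows essentially the same route as the paper: apply the modewise bound $|\omega^{i+1}(k_1,k_2)|\leq|\omega^{0}(k_1,k_2)|$ from Lemma \ref{L:6} to every Fourier coefficient, sum over all modes, and invoke Parseval's equality \eqref{3.17} to conclude $\|\zeta^{i+1}\|_2\leq\|\zeta^{0}\|_2$. The additional remarks on the convergence of the Parseval sum and on why no mesh restriction enters are harmless elaborations of the same argument.
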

\begin{proof}
	By considering equation (\ref{3.17}) and Lemma \ref{L:6} together, one can obtain the following inequality
\begin{align*}
	\|\zeta^{i+1}\|^2_2=L^2\mathop{\sum^{\infty}\sum^{\infty}}_{k_1=-\infty\ k_2=-\infty}|\omega^{i+1}(k_1,k_2)|^2\leq L^2\mathop{\sum^{\infty}\sum^{\infty}}_{k_1=-\infty\ k_2=-\infty}|\omega^0(k_1,k_2)|^2=\|\zeta^{0}\|^2_2.
\end{align*}
Thus, $\|\zeta^{i+1}\|_2\leq \|\zeta^{0}\|_2,~ 0\leq i\leq N_t-1.$
Hence, the numerical scheme given by (\ref{29}) is unconditionally stable.
\end{proof}
\subsection{\bf{Convergence analysis}}
\noindent In this part, we aim to assess the convergence of the proposed scheme (\ref{29}) with respect to the model problem (\ref{7})-(\ref{8}).

By subtracting (\ref{29}) from (\ref{27}), we obtain
\begin{align}\label{3.26}
	\nonumber&\left(\mathcal H_x-\frac{\sigma\lambda_1}{\mu}\delta_{x}^{2}\right)\left(\mathcal H_y-\frac{\sigma\lambda_2}{\mu}\delta_{y}^{2}\right)\epsilon^{i+1}_{m,n}=\frac{1}{\mu}\left[\sum_{k=1}^{i}\left(w^{(\alpha,\sigma)}_{i,k}-w^{(\alpha,\sigma)}_{i,k-1}\right)\mathcal H_x\mathcal H_y\epsilon^{k}_{m,n}\right]-\frac{\beta(1-\sigma)}{\mu}  \\\nonumber&~~\times \mathcal H_x\mathcal H_y\epsilon^{i}_{m,n}+\frac{(1-\sigma)}{\mu}\left(\lambda_1 \mathcal H_y	\delta_{{x}}^{2}+\lambda_2  \mathcal H_x	\delta_{y}^{2}\right)\epsilon^{i}_{m,n}+\frac{\lambda_1\lambda_2\sigma^2}{\mu^2}\delta_{x}^{2}\delta_{y}^{2}\epsilon^{i}_{m,n}+ (\mathcal R_f)_{m,n}^{i+1},\\&~~~~~~~~~~~~~~~~~~~~~~~~~~~~~~~~ ~1\leq m\leq M_x-1,~1\leq n\leq M_y-1,~0\leq~ i\leq N_t-1,
\end{align}
where 
\begin{equation}\label{3.27}
	\left\{
	\begin{array}{lll}
		\vspace{0.2cm}\epsilon^{i}_{m,n}=v^{i}_{m,n}-V^{i}_{m,n},~~1\leq m\leq M_x-1,~1\leq n\leq M_y-1,~1\leq~ i\leq N_t,\\\vspace{0.2cm}
		 \epsilon^{0}_{m,n}=0,~0 \leq m \leq M_x, ~0 \leq n \leq M_y,\\\vspace{0.2cm}
		\epsilon^{i}_{0,n}=\epsilon^{i}_{M_x,n}, ~0 \leq n \leq M_y,~1\leq~ i\leq N_t,\\
	\epsilon^{i}_{m,0}=\epsilon^{i}_{m,M_y},~0 \leq m \leq M_x,~1\leq~ i\leq N_t.
	\end{array}
	\right.
\end{equation}
Now, we define the grid functions $\epsilon^{i+1}(x,y)$ and $\mathcal R_f^{i+1}(x,y)$ for $0\leq i\leq N_t-1$ as follows
\begin{align*}
	\epsilon^{i+1}(x,y)=
	\begin{cases}
		\epsilon^{i+1}_{m,n},& x_{m-\frac{1}{2}}<x\leq x_{m+\frac{1}{2}}, y_{n-\frac{1}{2}}<y\leq y_{n+\frac{1}{2}},\\
		& 1\leq m\leq M_x-1, 1\leq n\leq M_y-1,\\
		0,&0\leq x\leq \frac{h_x}{2}, L-\frac{h_x}{2}<x\leq L,\\
		&0\leq y\leq \frac{h_y}{2}, L-\frac{h_y}{2}<y\leq L,
	\end{cases}
\end{align*}
and
\begin{align*}
\mathcal R_f^{i+1}(x,y)=
	\begin{cases}
		(\mathcal R_f)^{i+1}_{m,n},& x_{m-\frac{1}{2}}<x\leq x_{m+\frac{1}{2}}, y_{n-\frac{1}{2}}<y\leq y_{n+\frac{1}{2}},\\
		& 1\leq m\leq M_x-1, 1\leq n\leq M_y-1 ,\\
		0, &0\leq x\leq \frac{h_x}{2}, L-\frac{h_x}{2}<x\leq L,\\
		& 0\leq y\leq \frac{h_y}{2}, L-\frac{h_y}{2}<y\leq L.
	\end{cases}
\end{align*}
The expressions of $\epsilon^{i+1}(x,y)$ and $\mathcal R_f^{i+1}(x,y)$ as a Fourier series are as follows 
\begin{align*}
	\epsilon^{i+1}(x,y)=\mathop{\sum^{\infty}\sum^{\infty}}_{k_1=-\infty\ k_2=-\infty}\xi^{i+1}(k_1,k_2)e^{2\pi\iota\left(\frac{k_1x+k_2y}{L}\right)},\\
	\mathcal R_f^{i+1}(x,y)=\mathop{\sum^{\infty}\sum^{\infty}}_{k_1=-\infty\ k_2=-\infty}\chi^{i+1}(k_1,k_2)e^{2\pi\iota\left(\frac{k_1x+k_2y}{L}\right)},
\end{align*}
where
 \begin{align*}
&	\xi^{i+1}(k_1,k_2)=\frac{1}{L^2}\int_0^L\int_0^L\epsilon^{i+1}(x,y)e^{-2\pi\iota\left(\frac{k_1x+k_2y}{L}\right)}dxdy,\\
&	\chi^{i+1}(k_1,k_2)=\frac{1}{L^2}\int_0^L\int_0^L \mathcal R
_f^{i+1}(x,y)e^{-2\pi\iota\left(\frac{k_1x+k_2y}{L}\right)}dxdy.
\end{align*}
By virtue of the $L_2$-discrete norm definition and Parseval's equality, we have
\begin{align}
&\label{3.28}	\|\epsilon^{i+1}\|^2_2=\mathop{\sum^{M_x-1}\sum^{M_y-1}}_{m=1\ n=1} h_x h_y|\epsilon^{i+1}_{m,n}|^2=L^2\mathop{\sum^{\infty}\sum^{\infty}}_{k_1=-\infty\ k_2=-\infty}|\xi^{i+1}(k_1,k_2)|^2,\\&
	\|\mathcal R_f^{i+1}\|^2_2=\mathop{\sum^{M_x-1}\sum^{M_y-1}}_{m=1\ n=1} h_x h_y|(\mathcal R_f)^{i+1}_{m,n}|^2=L^2\mathop{\sum^{\infty}\sum^{\infty}}_{k_1=-\infty\ k_2=-\infty}|\chi^{i+1}(k_1,k_2)|^2,\label{3.29}
\end{align}
for $0\leq i\leq N_t-1$, where
\begin{align*}&\epsilon^{i+1}= \left[\epsilon^{i+1}_{1,1},\epsilon^{i+1}_{1,2},\dots,\epsilon^{i+1}_{1,M_y-1},\epsilon^{i+1}_{2,1},\epsilon^{i+1}_{2,2},\dots,\epsilon^{i+1}_{2,M_y-1},\dots,\epsilon^{i+1}_{M_x-1,1},\epsilon^{i+1}_{M_x-1,2},\dots,\epsilon^{i+1}_{M_x-1,M_y-1}\right]^T,\\&\mathcal R_f^{i+1}= \left[(\mathcal R_f)^{i+1}_{1,1},(\mathcal R_f)^{i+1}_{1,2},\dots,(\mathcal R_f)^{i+1}_{1,M_y-1},(\mathcal R_f)^{i+1}_{2,1},(\mathcal R_f)^{i+1}_{2,2},\dots,(\mathcal R_f)^{i+1}_{2,M_y-1},\dots,(\mathcal R_f)^{i+1}_{M_x-1,1},\right.\\&~~~~~~~~~~~~~~~~~~~~~~~~~~~~~~~~~~~~~~~~~~~~~~~~~~~~~~~~~~~~~~~~~~~~~~~~~~(\mathcal R_f)^{i+1}_{M_x-1,2},\dots,(\mathcal R_f)^{i+1}_{M_x-1,M_y-1}\Big]^T.\end{align*}
Let the solutions $\epsilon^{i+1}_{m,n}$ and $(\mathcal R_f)^{i+1}_{m,n}$ have following form 
\begin{align}\label{3.30}
	\epsilon^{i+1}_{m,n}=\xi^{i+1} e^{\iota( \varsigma_1m h_x+\varsigma_2n h_y)}
	,~~
	(\mathcal R_f)^{i+1}_{m,n}=\chi^{i+1} e^{\iota( \varsigma_1m h_x+\varsigma_2n h_y)},
\end{align}
where $\iota=\sqrt{-1},~\varsigma_1=\frac{2\pi k_1}{L},~\varsigma_2=\frac{2\pi k_2}{L}$.

It is readily apparent that
\begin{equation}\label{3.31}
	\left\{
	\begin{array}{lll}
		\vspace{0.2cm}\mathcal H_x\epsilon^{i}_{m,n}=\frac{1}{3}\left[\cos^2\left(\frac{\varsigma_1h_x}{2}\right)+2\right]\xi^i e^{\iota( \varsigma_1m h_x+\varsigma_2n h_y)},\\\vspace{0.2cm}
		\mathcal H_y\epsilon^{i}_{m,n}=	\frac{1}{3}\left[\cos^2\left(\frac{\varsigma_2 h_y}{2}\right)+2\right]\xi^i e^{\iota( \varsigma_1m h_x+\varsigma_2n h_y)},\\\vspace{0.2cm}
		\delta^2_x\epsilon^{i}_{m,n}=-\frac{4}{h_x^2}\sin^2\left(\frac{\varsigma_1h_x}{2}\right)\xi^i e^{\iota( \varsigma_1m h_x+\varsigma_2n h_y)},\\
		\delta^2_y\epsilon^{i}_{m,n}=-\frac{4}{h_y^2}\sin^2\left(\frac{\varsigma_2h_y}{2}\right)\xi^i e^{\iota( \varsigma_1m h_x+\varsigma_2n h_y)}.
	\end{array}
	\right.
\end{equation}
By inserting equation (\ref{3.30}) into equation (\ref{3.26}), using $\xi^0=0$, and equations given in (\ref{3.31}), we ascertain that
\begin{align}\label{3.32}
	 &
	\xi^{i+1}=\frac{\frac{a_1a_2}{\mu}\left(\displaystyle\sum_{k=1}^{i}(w^{(\alpha,\sigma)}_{i,k}-w^{(\alpha,\sigma)}_{i,k-1})\xi^{k}\right) -\frac{(1-\sigma)}{\mu}\left(\lambda_1a_2b_1+\lambda_2a_1b_2+\beta a_1a_2\right)\xi^{i}+\frac{\sigma^2}{\mu^2}\lambda_1\lambda_2b_1b_2\xi^{i}+\chi^{i+1}}{\left(a_1+\frac{\sigma\lambda_1}{\mu}b_1\right)\left(a_2+\frac{\sigma\lambda_2}{\mu}b_2\right)}.
\end{align}
As a result of the convergence exhibited by the series on the right hand side of equation (\ref{3.29}), it follows that there exists a positive value $d>0$ satisfying
\begin{align}\label{3.33}
	|\chi^{i+1}|\equiv |\chi^{i+1}(k_1,k_2)|\leq d\tau|\chi^1(k_1,k_2)|\equiv d\tau|\chi^1|,~0\leq i\leq N_t-1,
\end{align}
where $\tau = \displaystyle\max_{1\leq i\leq N_t}\tau_i.$
\begin{lemma}\label{L:7} 
	For $i=0,1,\dots,N_t-1$, we have
	\begin{align}\label{3.34} 
		|\xi^{i+1}|\leq \frac{9}{4}d(1+\tau)^{i+1}|\chi^1|,~0\leq i\leq N_t-1,
	\end{align}
	where $d$ is a constant in (\ref{3.33}).
\end{lemma}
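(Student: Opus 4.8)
The plan is to prove (\ref{3.34}) by induction on $i$, mirroring the structure of the stability argument in Lemma \ref{L:6} but now carrying the forcing contribution $\chi^{i+1}$ through the geometric factor $(1+\tau)^{i+1}$. Throughout I would abbreviate the denominator in (\ref{3.32}) as $D=\left(a_1+\frac{\sigma\lambda_1}{\mu}b_1\right)\left(a_2+\frac{\sigma\lambda_2}{\mu}b_2\right)$ and record at the outset the two elementary facts I will lean on: from (\ref{3.21}) one has $a_1,a_2\geq\frac23$ and $b_1,b_2\geq0$, so that $D\geq a_1a_2\geq\frac49$; and since $\sigma=1-\frac{\alpha}{2}\in(\frac12,1)$ we have $0\leq 1-\sigma\leq\sigma$. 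The lower bound $D\geq\frac49$ is what will turn $\frac{1}{D}$ into a harmless constant at the end.

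For the base case $i=0$, I would substitute $\xi^0=0$ into (\ref{3.32}), which annihilates every term of the numerator except $\chi^1$ and leaves $\xi^1=\chi^1/D$. The bound $D\geq\frac49$ then yields $|\xi^1|\leq\frac94|\chi^1|\leq\frac94 d(1+\tau)|\chi^1|$, where the last inequality uses $d(1+\tau)\geq d\tau\geq1$, a consequence of (\ref{3.33}) at $i=0$. This establishes (\ref{3.34}) for $i=0$.

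For the inductive step I would assume $|\xi^{l}|\leq\frac94 d(1+\tau)^{l}|\chi^1|$ for $1\leq l\leq i$ and estimate $|\xi^{i+1}|$ from (\ref{3.32}) by the triangle inequality. Three ingredients drive the argument: by Lemma \ref{L:1}(iii)--(iv) the differences $w^{(\alpha,\sigma)}_{i,k}-w^{(\alpha,\sigma)}_{i,k-1}$ are nonnegative and telescope to $w^{(\alpha,\sigma)}_{i,i}-w^{(\alpha,\sigma)}_{i,0}\leq w^{(\alpha,\sigma)}_{i,i}$; the monotonicity of $(1+\tau)^k$ lets me replace each $|\xi^k|$ $(1\leq k\leq i)$ by the common bound $\frac94 d(1+\tau)^{i}|\chi^1|$; and $1-\sigma\leq\sigma$ controls the sign-indefinite $\xi^i$ term. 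The decisive cancellation is that, after collecting the coefficient of $\frac94 d(1+\tau)^{i}|\chi^1|$ and substituting $\mu=\beta\sigma+w^{(\alpha,\sigma)}_{i,i}$, the combination $\frac{a_1a_2}{\mu}w^{(\alpha,\sigma)}_{i,i}+\frac{\sigma}{\mu}\beta a_1a_2$ collapses to $a_1a_2$, so that the entire bracket equals exactly $D$ — precisely the mechanism that pinned the amplification factor at $1$ in Lemma \ref{L:6}.

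Consequently the three homogeneous terms contribute at most $\frac94 d(1+\tau)^{i}|\chi^1|\cdot D$, while the forcing term, bounded via (\ref{3.33}) by $|\chi^{i+1}|\leq d\tau|\chi^1|=\frac49\tau\cdot\frac94 d|\chi^1|\leq\frac49\tau\cdot\frac94 d(1+\tau)^{i}|\chi^1|$, supplies the remainder. Dividing by $D$ and using $\frac{4}{9D}\leq1$ to absorb the forcing into a factor $\tau$, I would arrive at $|\xi^{i+1}|\leq\frac94 d(1+\tau)^{i}(1+\tau)|\chi^1|=\frac94 d(1+\tau)^{i+1}|\chi^1|$, closing the induction. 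I expect the main obstacle to be the bookkeeping in this step: verifying that the telescoped coefficients together with the $\mu=\beta\sigma+w^{(\alpha,\sigma)}_{i,i}$ substitution reproduce $D$ exactly, so that the homogeneous part contributes a clean factor $1$ and only the forcing generates the $(1+\tau)$ growth. Everything else reduces to the sign conditions of Lemma \ref{L:1} and the uniform lower bound $D\geq\frac49$.
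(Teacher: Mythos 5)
Your proposal is correct and follows essentially the same route as the paper's proof: induction on $i$ using the base case $\xi^1=\chi^1/D$ with $D\ge a_1a_2\ge\frac{4}{9}$, the telescoping of the nonnegative differences $w^{(\alpha,\sigma)}_{i,k}-w^{(\alpha,\sigma)}_{i,k-1}$ via Lemma \ref{L:1}, the bound $1-\sigma\le\sigma$, and the collapse $\frac{a_1a_2}{\mu}w^{(\alpha,\sigma)}_{i,i}+\frac{\sigma\beta a_1a_2}{\mu}=a_1a_2$ so that the homogeneous bracket is dominated by $D$ and only the forcing term generates the factor $(1+\tau)$. Your justification of the base-case inequality via $d\tau\ge1$ from (\ref{3.33}) at $i=0$ is in fact more explicit than the paper's, which leaves that step implicit.
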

\begin{proof}
We shall establish the validity of this statement by employing the principle of mathematical induction. By substituting $i=0$ and using $\xi^0=0$ into equation (\ref{3.32}), we obtain 
	\begin{align*}
		&
		\xi^{1}=\frac{\chi^{1}}{\left(a_1+\frac{\sigma\lambda_1}{\mu}b_1\right)\left(a_2+\frac{\sigma\lambda_2}{\mu}b_2\right)}.
	\end{align*}
	By utilizing equation (\ref{3.34}) along with the provided conditions $a_1,a_2 \geq \frac{2}{3}$ and $b_1,b_2 \geq 0$, we can conclude  
	\begin{align}\label{3.35}
		|\xi^1|\leq \frac{9}{4}d(1+\tau)|\chi^1|.
	\end{align}
	Now, let us assume that \begin{align}\label{3.36}
		|\xi^l|\leq \frac{9}{4}d(1+\tau)^l|\chi^1|, ~\text{for all}~ 1\leq l\leq i.
	\end{align} 
	Next, for $l=i+1$, from equation (\ref{3.32}) with Lemma \ref{L:1}, and equation (\ref{3.33}), we get
	\begin{align}
	\nonumber	|\xi^{i+1}|=&\frac{1}{\left(a_1+\frac{\sigma\lambda_1}{\mu}b_1\right)\left(a_2+\frac{\sigma\lambda_2}{\mu}b_2\right)}\left[\frac{a_1a_2}{\mu}\left(\displaystyle\sum_{k=1}^{i}\left(w^{(\alpha,\sigma)}_{i,k}-w^{(\alpha,\sigma)}_{i,k-1}\right)\right)\displaystyle \max_{1\leq j\leq i}|\xi^{j}| +\frac{(1-\sigma)}{\mu}\right.\\	\nonumber&~\times\left(\lambda_1a_2b_1+\lambda_2a_1b_2+\beta a_1a_2\right)|\xi^{i}|+\frac{\sigma^2}{\mu^2}\lambda_1\lambda_2b_1b_2|\xi^{i}|+d\tau|\chi^{1}|\Bigg].\end{align}
	Further, using the assumptions (\ref{3.36}) and $\sigma\geq(1-\sigma)\geq0$, we get
		\begin{align*}
	|\xi^{i+1}|&\leq	\frac{1}{\left(a_1+\frac{\sigma\lambda_1}{\mu}b_1\right)\left(a_2+\frac{\sigma\lambda_2}{\mu}b_2\right)}\left[\left(\frac{a_1a_2}{\mu}\left(w^{(\alpha,\sigma)}_{i,i}-w^{(\alpha,\sigma)}_{i,0}\right) +\frac{\sigma}{\mu}\left(\lambda_1a_2b_1+\lambda_2a_1b_2+\beta a_1a_2\right)\right.\right.\\&~+\frac{\sigma^2}{\mu^2}\lambda_1\lambda_2b_1b_2\bigg)\left( \frac{9}{4}d(1+\tau)^i|\chi^1|\right)+\frac{9}{4}d\tau|\chi^{1}|\Bigg].
	\end{align*}
	Again, by invoking Lemma \ref{L:1}, we have
		\begin{align*}
		|\xi^{i+1}|&\leq  \frac{9}{4}d\Bigl\{(1+\tau)^i+\tau\Bigr\}|\chi^{1}|\leq \frac{9}{4}d(1+\tau)^{i+1}|\chi^1|.
	\end{align*} 
	Thus, we have the Lemma.
\end{proof}
\begin{Theorem}
Assume that the problem (\ref{7})-(\ref{8}) has a solution $v(x, y, t)$, which meets the assumptions provided in (\ref{4}) and (\ref{5}) and let $V=\{V^i_{m,n}~|~ 0\leq m\leq M_x, 0\leq n\leq  M_y, 
1\leq i\leq  N_t$\} be the solution of the high-order compact ADI scheme given by (\ref{29}). Then, we have
\begin{align*} 
	\|v^{i+1}-V^{i+1}\|_2\leq&~\hat c
	\left(N_t^{-\min\{3-\alpha,\theta\alpha,1+2\alpha,2+\alpha\}}+h_x^4+h_y^4\right),~0\leq i\leq N_t-1,
\end{align*}
where $\hat c=
\frac{9}{4}\tilde c	d\exp({2T_f})$.
	\end{Theorem}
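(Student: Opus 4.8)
The plan is to work entirely within the Fourier framework already established through equations (\ref{3.26})--(\ref{3.32}), combining the mode-wise growth estimate of Lemma \ref{L:7}, Parseval's identities (\ref{3.28}) and (\ref{3.29}), the truncation-error bound of Theorem \ref{T:3.1}, and a uniform control of the amplification factor $(1+\tau)^{i+1}$. Since $\epsilon^{i+1}_{m,n}=v^{i+1}_{m,n}-V^{i+1}_{m,n}$ is exactly the error, it suffices to estimate $\|\epsilon^{i+1}\|_2$ and read off the constant $\hat c=\frac{9}{4}\tilde c\,d\,\exp(2T_f)$.

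First I would invoke Lemma \ref{L:7}, which holds mode by mode: for every pair $(k_1,k_2)$ one has $|\xi^{i+1}(k_1,k_2)|\leq \frac{9}{4}d(1+\tau)^{i+1}|\chi^1(k_1,k_2)|$. Squaring this inequality, summing over all $(k_1,k_2)$, and multiplying by $L^2$, the Parseval identity (\ref{3.28}) for $\epsilon^{i+1}$ together with (\ref{3.29}) for $\mathcal R_f^1$ converts the mode-wise bound into the norm inequality
\[
\|\epsilon^{i+1}\|_2\leq \frac{9}{4}d(1+\tau)^{i+1}\|\mathcal R_f^1\|_2,\qquad 0\leq i\leq N_t-1.
\]
The essential point here is that the amplification constant $\frac{9}{4}d(1+\tau)^{i+1}$ is independent of $(k_1,k_2)$, so it factors cleanly out of the sum.

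Next I would bound the two remaining quantities. For $\|\mathcal R_f^1\|_2$, Theorem \ref{T:3.1} supplies the pointwise estimate $|(\mathcal R_f)^1_{m,n}|\leq c(N_t^{-\min\{3-\alpha,\theta\alpha,1+2\alpha,2+\alpha\}}+h_x^4+h_y^4)$; inserting this into $\|\mathcal R_f^1\|_2^2=\sum_{m=1}^{M_x-1}\sum_{n=1}^{M_y-1}h_xh_y|(\mathcal R_f)^1_{m,n}|^2$ and using $\sum h_xh_y\leq L^2$ yields $\|\mathcal R_f^1\|_2\leq \tilde c(N_t^{-\min\{3-\alpha,\theta\alpha,1+2\alpha,2+\alpha\}}+h_x^4+h_y^4)$ for a suitable $\tilde c$. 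For the growth factor I would write $(1+\tau)^{i+1}\leq \exp((i+1)\tau)$ and bound $(i+1)\tau\leq N_t\tau$; invoking the mesh estimate (\ref{3.10}), namely $\tau\lesssim T_f/N_t$, gives $N_t\tau\lesssim T_f$, hence $(1+\tau)^{i+1}\leq \exp(2T_f)$ for all admissible $i$. Substituting both bounds produces $\|v^{i+1}-V^{i+1}\|_2\leq \frac{9}{4}\tilde c\,d\,\exp(2T_f)(N_t^{-\min\{3-\alpha,\theta\alpha,1+2\alpha,2+\alpha\}}+h_x^4+h_y^4)=\hat c(N_t^{-\min\{3-\alpha,\theta\alpha,1+2\alpha,2+\alpha\}}+h_x^4+h_y^4)$, as claimed.

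The main obstacle is the uniform-in-$i$ control of the amplification factor $(1+\tau)^{i+1}$: without the mesh bound (\ref{3.10}) the product $(i+1)\tau$ could grow with $N_t$, and the estimate would degrade. The point is that (\ref{3.10}) caps the largest step $\tau=\max_i\tau_i$ at order $T_f/N_t$ on the fitted mesh, so $N_t\tau$ stays bounded by a constant multiple of $T_f$ and the exponential factor is uniform. The only other delicate step is ensuring the mode-wise estimate of Lemma \ref{L:7} genuinely carries the same constant for every $(k_1,k_2)$; this is guaranteed because the sign and positivity properties furnished by Lemma \ref{L:1} make the bound on the amplification factor independent of the frequencies $\varsigma_1,\varsigma_2$, which is precisely what allows Parseval's identity to be applied term by term.
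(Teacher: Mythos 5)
Your proposal is correct and follows essentially the same route as the paper: Lemma \ref{L:7} plus Parseval's identities (\ref{3.28})--(\ref{3.29}) to pass from the mode-wise bound to $\|\epsilon^{i+1}\|_2\leq \frac{9}{4}d(1+\tau)^{i+1}\|\mathcal R_f^1\|_2$, Theorem \ref{T:3.1} with the discrete-norm definition to bound $\|\mathcal R_f^1\|_2$ by $\tilde c$ times the claimed rate, and the observation that $(1+\tau)^{i+1}\leq\exp(2T_f)$ since $\tau$ and $i\tau$ are bounded by $T_f$. The only cosmetic difference is that you route the last step through the mesh estimate (\ref{3.10}) while the paper simply notes $\tau, i\tau\leq T_f$ directly; the argument is otherwise identical.
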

	\begin{proof}
Utilizing both Theorem \ref{T:3.1} and the first equality of (\ref{3.29}), we arrive at
\begin{align}\label{3.37}
\nonumber	\|\mathcal R^{i+1}_f\|_2\leq&~c \sqrt{M_x h_x}\sqrt{M_y h_y}\left(N_t^{-\min\{3-\alpha,\theta\alpha,1+2\alpha,2+\alpha\}}+h_x^4+h_y^4\right)\\\leq&~\tilde c\left(N_t^{-\min\{3-\alpha,\theta\alpha,1+2\alpha,2+\alpha\}}+h_x^4+h_y^4\right),~0\leq i\leq N_t-1,
\end{align}
where $\tilde c=cL$.
By invoking Lemma \ref{L:7}, along with equations (\ref{3.28}), (\ref{3.29}), and (\ref{3.37}), for $0\leq i\leq N_t-1 $, we can deduce the following estimate
\begin{align*}
	\|\epsilon^{i+1}\|^2_2=&~L^2\mathop{\sum^{\infty}\sum^{\infty}}_{k_1=-\infty\ k_2=-\infty}|\xi^{i+1}(k_1,k_2)|^2\\
	\leq&~L^2\mathop{\sum^{\infty}\sum^{\infty}}_{k_1=-\infty\ k_2=-\infty}\left(\frac{9}{4}d\right)^2(1+\tau)^{2(i+1)}|\chi^1(k_1,k_2)|^2\\=&~
	\left(\frac{9}{4}d\right)^2(1+\tau)^{2(i+1)}\|\mathcal R_f^{1}\|^2_2\\\leq&~\tilde c^2	
	\left(\frac{9}{4}d\right)^2(1+\tau)^{2(i+1)}\left(N_t^{-\min\{3-\alpha,\theta\alpha,1+2\alpha,2+\alpha\}}+h_x^4+h_y^4\right)^2.
\end{align*}
Moreover, by noting the fact that $\tau,i\tau\leq T_f,$ we obtain
\begin{align*} 
		\|\epsilon^{i+1}\|_2\leq&~\hat c
		\left(N_t^{-\min\{3-\alpha,\theta\alpha,1+2\alpha,2+\alpha\}}+h_x^4+h_y^4\right),
\end{align*}
where $\hat c=
\frac{9}{4}\tilde c	d\exp({2T_f})$.

Henceforth, the theorem holds.
\end{proof}

\section {Conclusion}\label{con}
\noindent The focus of this study was to provide a highly efficient high-order numerical method for numerically solving the two-dimensional time-fractional convection-diffusion equation. A notable aspect of this problem was the utilization of the Caputo time-fractional derivative, which led to an initial weak singularity at $t=0$. The core foundation of the proposed scheme centered around the utilization of two key components: the adoption of the L2-1$_\sigma$ approximation formula for the Caputo time-fractional derivative implemented on a fitted mesh and a high-order compact finite difference approximation for the space derivatives on a uniform mesh. Moreover, to solve the resulting system, a two-step ADI (Alternating Direction Implicit) approach is employed. In addition, a rigorous theoretical analysis has been undertaken, thoroughly assessing the stability and convergence of the proposed scheme. Our findings demonstrated that the method exhibits unconditional stability for every $\alpha\in(0,1)$, accompanied by uniform convergence with an order of $\mathcal O\left(N_t^{-\min\{3-\alpha,\theta\alpha,1+2\alpha,2+\alpha\}}+h_x^4+h_y^4\right)$. 
\section*{Declaration of competing interest}
 Conflict of interest: The authors have no relevant financial or non-financial interests to disclose.
\section*{Data availability}
No data was used for the research described in the article.

\newpage
	\bibliographystyle{elsarticle-num}
	\bibliography{references}

\end{document}